\newtheorem{theorem}{Theorem}
\newtheorem{proposition}[theorem]{Proposition}
\newtheorem{corollary}[theorem]{Corollary}
\newtheorem{lemma}[theorem]{Lemma}
\newtheorem{claim}[theorem]{Claim}
\newtheorem{definition}[theorem]{Definition}
\theoremstyle{definition}
\newtheorem{example}[theorem]{Example}
\theoremstyle{remark}
\newtheorem{remark}[theorem]{Remark}
\newcommand{\N}{\mathcal N}
\renewcommand{\P}{\mathcal P}
\renewcommand{\L}{\mathcal L}
\newcommand{\SD}{\mathcal{SD}}
\newcommand{\WD}{\mathcal{WD}}
\newcommand{\F}{\mathcal{F}}
\newcommand{\UI}{\mathcal{UI}}
\newcommand{\R}{\mathcal R}
\renewcommand{\sl}{S_{\mathcal L}}
\newcommand{\sr}{S_{\mathcal R}}
\DeclareMathOperator{\dist}{\mathbf{d}}
\title{Partizan Subtraction Games\thanks{Supported by the ANR-14-CE25-0006 project of the French National Research Agency}}
\author{Eric Duch\^ene$^1$, Marc Heinrich$^1$, Richard J. Nowakowski$^2$, Aline Parreau$^1$}
\date{$^1$Univ Lyon, Université Lyon 1, LIRIS UMR CNRS 5205, F-69621, Lyon, France.\\ $^2$Dalhousie University, Halifax, Canada.}
\begin{document}

\maketitle


\begin{abstract}
Partizan subtraction games are combinatorial games where two players, say  Left  and  Right, alternately remove a number $n$ of tokens from a heap of tokens, with $n\in\sl$ (resp. $n\in\sr$) when it is  Left's (resp.  Right's) turn. The first player unable to move loses. These games were introduced by Fraenkel and Kotzig in 1987, where they introduced the notion of dominance, i.e. an asymptotic behavior of the outcome sequence where  Left  always wins if the heap is sufficiently large. In the current paper, we investigate the other kinds of behaviors for the outcome sequence. In addition to dominance, three other disjoint behaviors are defined, namely {\em weak dominance}, {\em fairness} and {\em ultimate impartiality}. We consider the problem of computing this behavior with respect to $\sl$ and $\sr$, which is connected to the well-known Frobenius coin problem. General results are given, together with arithmetic and geometric characterizations when the sets $\sl$ and $\sr$ have size at most $2$.
\end{abstract}

\section{Introduction}

Partizan subtraction games were introduced by Fraenkel and Kotzig in 1987 \cite{FrKo87}. They are 2-player combinatorial games played on a heap of tokens. Each player is assigned a finite set of integers, respectively denoted $\sl$ (for the  Left  player), and $\sr$ (for the  Right  player). A move consists in removing a number $m$ of tokens from the heap, provided $m$ belongs to the set of the player. The first player unable to move loses. When $\sl=\sr$, the game is impartial and known as the standard {\sc subtraction game}---see \cite{WW}.

We now recall the useful notations and definitions coming from combinatorial game theory. More information can be found in the reference book \cite{Siegel}. There are two basic \textit{outcome} functions: for a position $g$,
\[
o_L(g)=\begin{cases}
   L, \text{  if Left moving first has a winning strategy;}\\
   R, \text{ otherwise;}
\end{cases}
\]
and
\[
o_R(g)=\begin{cases}
   R, \text{  if Right moving first has a winning strategy;}\\
   L, \text{ otherwise.}
\end{cases}
\]

It is usual to talk of the  {\em outcome} of a position $g$ and the associated outcome function $o(g)$,
\begin{itemize}
\item For $o_L(g)=o_R(g)=L$---Left wins regardless of who moves first, written $o(g)=\L$;
\item For $o_L(g)=o_R(g)=R$---Right wins regardless of who moves first, written $o(g)=\R$;
\item For $o_L(g)=L$, $o_R(g)=R$---the player who starts has a winning strategy, $o(g)=\N$;
\item For $o_L(g)=R$, $o_R(g)=L$---the second player has a winning strategy, $o(g)=\P$.
\end{itemize}
In outcome function, there should be a reference to the game/rules. In this paper, the position will be a number but the rules will be clear from the context so the rules will not be included in the function.

A partizan subtraction game $G$ with rules $(\sl,\sr)$ will be denoted $(\sl, \sr)$ in the rest of the paper. A game position of $G$ will be simply denoted by an integer $n$ corresponding to the size of the heap. The {\em outcome sequence} of $G$ is the sequence of the outcomes for $n=0,1,2,3,\ldots$, i.e., $o(0), o(1), o(2), \ldots$.
 A well-known result ensures that the outcome sequence of any impartial subtraction game is ultimately periodic~\cite{Siegel}. Note that in that case, the outcomes only have the values $\P$ and $\N$ since the game is impartial. In \cite{FrKo87}, this result is extended to partizan subtraction games.

\begin{theorem}[Fraenkel and Kotzig \cite{FrKo87}]
The outcome sequence of any partizan subtraction game is ultimately periodic.
\end{theorem}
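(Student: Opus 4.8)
The plan is to show that the outcome sequence is eventually periodic by exhibiting a finite-state description of the game dynamics.

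The plan is to realize the outcome sequence as the trajectory of a deterministic dynamical system on a finite state space, so that eventual periodicity follows immediately from the pigeonhole principle. Throughout, set $s = \max(\sl \cup \sr)$, the largest legal subtraction.

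First I would record the local recurrence satisfied by the two basic outcome functions. Unwinding the definitions, Left moving first at $n$ wins exactly when she has a move to a position that is losing for Right-to-move; that is,
$$o_L(n) = L \iff \exists\, a \in \sl \text{ with } a \le n \text{ and } o_R(n-a) = L,$$
and symmetrically
$$o_R(n) = R \iff \exists\, b \in \sr \text{ with } b \le n \text{ and } o_L(n-b) = R.$$
When no legal move exists the mover loses, which is already the default branch of these equivalences. The crucial point is that both $o_L(n)$ and $o_R(n)$ are determined by the pairs $(o_L(m), o_R(m))$ with $n - s \le m \le n-1$, since every index appearing on the right is of the form $n - a$ or $n - b$ with $1 \le a, b \le s$.

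Next I would package this dependence into a finite-state transition. For $n \ge s$ define the window
$$\Sigma(n) = \big( (o_L(n-s+1), o_R(n-s+1)), \ldots, (o_L(n), o_R(n)) \big),$$
a vector of $s$ pairs, each pair lying in the four-element set $\{L,R\}^2$, so $\Sigma(n)$ ranges over a set of at most $4^s$ values. By the recurrence above, the new pair $(o_L(n+1), o_R(n+1))$ is a fixed function of the entries of $\Sigma(n)$; hence $\Sigma(n+1)$, obtained from $\Sigma(n)$ by discarding the oldest pair and appending the new one, equals $\Phi(\Sigma(n))$ for a single map $\Phi$ independent of $n$.

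Because $\Phi$ is a self-map of a finite set, the orbit $\Sigma(s), \Sigma(s+1), \ldots$ must eventually revisit a state: there are indices $i < j$ with $\Sigma(i) = \Sigma(j)$, and determinism then forces $\Sigma(n + (j-i)) = \Sigma(n)$ for all $n \ge i$. Since $o(n)$ is recoverable from the last pair of $\Sigma(n)$, the outcome sequence is ultimately periodic with period dividing $j - i$. I do not anticipate a genuine obstacle here; the only points demanding care are the correct translation of the winning conditions into the recurrence, in particular the asymmetric roles of $L$ and $R$ in $o_L$ versus $o_R$, and the treatment of small heaps $n < s$, which lie in the finite pre-periodic part and can simply be excluded from the window argument.
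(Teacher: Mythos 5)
Your proof is correct, but note that the paper never proves this statement at all: it is imported verbatim from Fraenkel and Kotzig \cite{FrKo87}, so there is no internal argument to compare yours against, and a self-contained proof is genuinely added value here. Checking your argument: the two recurrences carry the right asymmetry ($o_L(n)=L$ iff some $a\in\sl$ with $a\le n$ has $o_R(n-a)=L$, and $o_R(n)=R$ iff some $b\in\sr$ with $b\le n$ has $o_L(n-b)=R$, the mover losing by default when no move exists), and the one place where uniformity of the transition map could fail --- the legality condition $a\le n$ --- is harmless in the window regime, since for $n\ge s$ every subtraction is legal, so $\Phi$ really is a single self-map of a set of size at most $4^s$. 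Pigeonhole gives $\Sigma(i)=\Sigma(j)$ with $i<j$, determinism propagates this to $\Sigma(n+(j-i))=\Sigma(n)$ for all $n\ge i$, and $o(n)$ is a function of the last entry of $\Sigma(n)$, so the outcome sequence is ultimately periodic. This is the classical sliding-window argument used for impartial subtraction games (the result the paper attributes to \cite{Siegel} just before this theorem), and it even yields explicit bounds the bare statement does not claim: preperiod at most $s+4^s$ and period length at most $4^s$. Two cosmetic points only: the window $\Sigma(n)$ is already well defined for $n\ge s-1$ (you need $n-s+1\ge 0$, and position $0$ with $o(0)=\P$ can be included or excluded harmlessly), and the phrase ``period dividing $j-i$'' implicitly uses the standard fact that the minimal period of the periodic tail divides any period of that tail --- true, but worth a half-sentence if you want the proof airtight.
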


\begin{example}\label{ex:1213}
Consider the partizan subtraction game $G=(\{1,2\},\{1,3\})$. The outcome sequence of $G$ is
$$
\P\; \N\; \L\; \N\; \L\; \L\; \L\; \L\; \cdots
$$
In this particular case, the periodicity of the sequence can be easily proved by showing by induction that the outcome is $\L$ for $n\geq 4$.
\end{example}

Such a behavior where the outcome sequence has period $1$ is rather frequent for partizan subtraction games. In that case, the period is only $\L$ or $\R$. In their paper, Fraenkel and Kotzig called this property {\em dominance}. More precisely, we say that $\sl\succ \sr$ - or that $\sl$ dominates $\sr$ - if there exists an integer $n_0$ such that the outcome of the game $(\sl, \sr)$ is always $\L$ for all $n\geq n_0$. By symmetry, a game satisfying $\sl\prec \sr$ is always $\R$ for all sufficiently large heap sizes. When a game neither satisfies $\sl\succ \sr$ nor $\sl\prec \sr$, the sets $\sl$ and $\sr$ are said {\em incomparable}, denoted by $\sl \| \sr$. In \cite{FrKo87}, several instances have been proved to satisfy the dominance property (i.e. the games $(\{1,2m\},\{1,2n+1\})$ and $(\{1,2m\},\{1,2n\})$), or to be incomparable like $(\{a\},\{b\})$. It is also shown that the dominance relation is not transitive. Note that in \cite{Plam95}, the game values (i.e. a refinement of the outcome notion) have been computed for the games $(\{1,2\},\{1,k\})$.

In the literature, partizan taking and breaking games have not been so much considered. A more general version, where it is also allowed to split the heap into two heaps, was introduced by Fraenkel and Kotzig in \cite{FrKo87}, and is known as partizan octal games. A particular case of such games, called {\em partizan splittles}, was considered in \cite{Mesdal}, where, in addition, $\sl$ are $\sr$ are allowed to be infinite sets. Another variation with infinite sets is when $\sl$ and $\sr$ make a partition of $\mathbb{N}$ \cite{WPS}. In such cases, the ultimate periodicity of the outcome sequence is not necessarily preserved.

In the current paper, we propose a refinement of the structure of the outcome sequence for partizan subtraction games. More precisely, when the sets $\sl$ and $\sr$ are incomparable, different kinds of periodicity can occur. The following definition presents a classification for them.

\begin{definition}\label{def:sequence}
The outcome sequence of $G = \textsc{Subtraction}(\sl, \sr)$ is:
\begin{itemize}
\item $\SD$ (Strongly Dominating) for  Left  (resp.  Right ), and we write $\sl \succ \sr$ (resp. $\sl \prec \sr$) if any position $n$ large enough has outcome $\L$ (resp. $\R$). In other words, the period is reduced to $\L$ (resp. $\R$).
\item $\WD$ (Weakly Dominating) for  Left  (resp.  Right ), and we write $\sl >_w \sr$ if the period contains at least one $\L$ and no $\R$ (or resp. one $\R$ and no $\L$).
\item $\F$ (Fair) if the period contains both $\L$ and $\R$.
\item $\UI$ (Ultimately Impartial) if the period contains no $\L$ and no~$\R$.
\end{itemize}
\end{definition}

\begin{remark}\label{rem:seq_interdites}
Note that inside a period, not all the combinations of $\P$, $\N$, $\L$ and $\R$ are possible. For example, in
 a game that is not $\UI$, a period that includes $\P$ must include $\N$. Indeed, assume on the contrary it is
 not the case and let $n$ be a position of outcome $\P$ in the period, where the period has length $p$.
  Let $a\in\sl$. Now the position $n+a$ is in the period, and $o(n+a)= \L$ since Left can win going first and,
  by assumption, $o(n+a)\ne \N$. For the same reason, $o(n+2a)= \L$. By repeating this argument,  $o(n+ka)= \L$ for all $k$.
  Since $n$ is in the period, we now have $\P=o(n)=o(n+pa)=\L$,  a contradiction.
  \end{remark}

If the literature detailed above give examples of $\SD$ and $\UI$ games (as impartial subtraction games are $\UI$), we will see later in this paper examples of $\WD$ games (e.g. Lemma~\ref{lem:1kk}). The example below shows an example of a fair game.
\begin{example}\label{ex:intro}
    Let $\sl = \{ c,c+1 \}$ and $\sr = \{ 1, b\}$ with $b = c(c+1)$ and $c>1$. Then the game $(\sl, \sr)$ is $\F$.
\end{example}
\begin{proof}
We proceed by induction on the size of the heap, in order to show there are infinitely many $\L$ and $\R$. Since $c>1$, we have $o(1)=\R$, and  $o(c+1)=\L$. Now we assume that for some $n$, $o(n)=\L$, and  show that $o(n+b+c)=\L$. In the position $n+b+c$, Left considers these as the
two heaps $n$ and $b+c$, and if Right removes $1$, Left regards this as a move in the $n$ component else it is a move in the second heap.
Left moving first applies her winning strategy on $n$ and then,
regardless of whether Left moved first or second, responds in the remnants of $n$ heap whenever Right removes $1$ token. If at some point, Right chooses to remove $b$ tokens, then Left answers immediately by removing $c$ tokens, eliminating the second heap. In that case, Left wins at the end by applying her winning strategy on $n$. On the contrary, if Right never plays $b$, then Left empties the $n$ component and it is Right's turn from the $b+c$ position. Again, from $b+c$, playing $b$ is a losing move for Right. If he plays $1$, then Left plays $c+1$, leading to the position $b-2=(c-1)(c+2)$. All the next legal moves of Right are $1$, and all the answers of Left are $c+1$, which guarantees to empty the position and hence win the game.\\

Assume now that $o(n)=\R$ and we show that $o(n+b+c)=\R$. As previously, Right considers this position as the two heaps $n$ and $b+c$.
He applies his winning strategy on $n$ and any move $c$ of Left leads Right to answer by removing $b$ tokens, leaving a winning position for  Right. Hence assume that  Left  plays $c+1$ until  Right  wins on $n$. At this point,  Left  has to play from a position $k+b+c$ with $k<c$. If $k=0$, then  Left  loses for the same reasons as in the above case (as the position $b+c$ is $\P$). Otherwise, any move $c$ or $c+1$ of  Left  is followed by a move $b$ of  Right, leading to a position with at most $k$ tokens, from which  Left  cannot play and loses.
\end{proof}

The paper is organized as follows. In Section 2, we consider the two decision problems related to the computation of the outcome of a game position and of the behavior of the outcome sequence. Links with the Frobenius coin problem and the knapsack problem are given. Then, we try to characterize the behavior of the outcome sequence ($\SD$, $\WD$, $\F$ or $\UI$) according to $\sl$ and $\sr$. When $\sl$ is fixed, Section 3 gives general results about strong and weak dominance according to the size of $\sr$. In Section 4 and 5, we characterize the behavior of the outcome sequence when $|\sr|=1$ and $|\sl|\leq 2$. Section 6 is devoted to the case $|\sl|=|\sr|=2$, where it is proved that the sequence is mostly strongly dominating.

\section{Complexity}\label{sec:complexity}

Computing the outcome of a game position is a natural question when studying combinatorial games. For partizan subtraction games, we know that the outcome sequence is eventually periodic. This implies that, if $\sl$ and $\sr$ are fixed, computing the outcome of a given position $n$ can be done in polynomial time. However, if the subtraction sets are part of the input, then the algorithmic complexity of the problem is not so clear. This problem can be expressed as follows:\\

\noindent {\sc psg outcome} \\
\underline{Input:} two sets of integers $\sl$ and $\sr$, a game position $n$ \\
\underline{Output:} the outcome of $n$ for the game $(\sl,\sr)$\\

In the next result, we show that this problem is actually NP-hard.

\begin{theorem}
\label{thm:ukp}
\textsc{psg outcome} is NP-hard, even in the case where the set of one of the players is reduced to one element.
\end{theorem}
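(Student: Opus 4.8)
The plan is to give a polynomial-time reduction from the (unbounded) integer representability problem: given positive integers $a_1,\dots,a_k$ and a target $b$, decide whether $b=\sum_i c_i a_i$ has a solution in nonnegative integers $c_i$. This is exactly the feasibility version of the coin/Frobenius problem, and it is known to be NP-complete (one-equation nonnegative integer programming, in the spirit of Lueker). I would realise such an instance as a game in which Right is the restricted player, taking $\sr=\{1\}$, so that Right's only option is to decrement the heap by one. The whole point of choosing $\sr=\{1\}$ is that the game becomes completely rigid for Right, and a short induction then translates the statement ``Left wins'' into a statement about representability.

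Concretely, I would set $\sl=\{a_1-1,\dots,a_k-1\}$ (we may assume every $a_i\ge 2$, since a coin equal to $1$ makes every target representable and the instance trivial), $\sr=\{1\}$, and query the position $n=b-1$. The key lemma to establish is the following: writing $f(n)=1$ iff $o_L(n)=L$, one has $f(0)=0$ and, for $n\ge 1$,
\[
f(n)=1 \iff n\in\sl \ \text{ or }\ \exists\, s\in\sl,\ s<n,\ f(n-s-1)=1 .
\]
This is immediate from the rules: after Left removes $s<n$ the heap is still positive, so Right is \emph{forced} to remove exactly one token, landing on $n-s-1$; and if $s=n$ then Left empties the heap and Right cannot move. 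Unfolding the recurrence shows that $f(n)=1$ exactly when there exist moves $s^{(1)},\dots,s^{(r)}\in\sl$ with $n=\sum_{j=1}^{r}s^{(j)}+(r-1)$, equivalently $n+1=\sum_{j=1}^{r}(s^{(j)}+1)$. Hence $o_L(n)=L$ if and only if $n+1$ is a positive integer combination of the numbers $\{s+1:s\in\sl\}=\{a_1,\dots,a_k\}$.

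With this lemma the reduction closes at once: $o_L(b-1)=L$ if and only if $b$ is representable over $a_1,\dots,a_k$. Since the full outcome determines $o_L$ (indeed $o(n)\in\{\L,\N\}$ iff $o_L(n)=L$, and $o(n)\in\{\R,\P\}$ iff $o_L(n)=R$), any algorithm computing the outcome of $(\sl,\sr)$ at $b-1$ decides the representability instance. The construction is plainly polynomial, as the integers are merely shifted by one, and $\sr$ is a singleton, which gives NP-hardness in the announced restricted form.

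The step I expect to require the most care is the choice and justification of the base problem, rather than the game-theoretic argument, which is elementary once $\sr=\{1\}$. Because a subtraction move may be repeated arbitrarily often, the natural target is the \emph{unbounded} representability/coin problem, so I must invoke its NP-completeness rather than ordinary $0/1$ Subset Sum; checking the trivial boundary cases ($b\le 1$, or some $a_i\le 1$) and confirming that ``positive combination'' and ``nonnegative combination equal to $b$'' coincide here (since $b\ge 1$ and all generators are $\ge 2$) are the only remaining details.
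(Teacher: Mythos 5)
Your proof is correct and is essentially the paper's own reduction: both arguments reduce from the unbounded knapsack (representability) problem by giving one player the singleton $\{1\}$ and the other player the set $S-1$, so that each round of play removes exactly one element of $S$ and winning corresponds to exact representability of the target. The only cosmetic differences are that you swap the players' roles (Right restricted to $\{1\}$ rather than Left) and accordingly query the position $b-1$ for a first-player win, where the paper queries $b$ for a second-player win; your recurrence-unfolding lemma is just a more formal rendering of the paper's round-pairing strategy argument.
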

\begin{proof}
We use a reduction to {\sc Unbounded Knapsack Problem} defined as follows. \\

\noindent {\sc Unbounded Knapsack Problem} \\
\underline{Input:} a set $S$ and an integer $n$ \\
\underline{Output:} can $n$ be written as a sum of non-negative multiples of $S$?\\

{\sc Unbounded Knapsack Problem} was shown to be NP-complete in~\cite{knapsack}.\\

Let $(S,n)$ be an instance of {\sc unbounded knapsack problem}, where $S$ is a finite set of integers, and $n$ is a positive integer. Without loss of generality, we can assume that $1 \not \in S$ since otherwise the problem is trivial. We consider the partizan subtraction game where  Left  can only play $1$, and  Right  can play any number $x$ such that $x + 1 \in S$. In other words, we have $S_L = \{ 1\}$ and $S_R = S -1$. We claim that for this game,  Right  has a winning strategy playing second if and only if $n$ can be written as a sum of non-negative multiples of elements of $S$.

    Observe that during one round (i.e. one move of  Left  followed by one move of  Right), if $x$ is the number of tokens that were removed, then $x \in S$. Suppose that Right has a winning strategy, and consider any play where  Right  plays according to this strategy. Then  Right  makes the last move, and after this move no token remains. Indeed, if there was at least one token remaining, then  Left  could still remove this token and continue the game. At each round an element of $S$ was removed, and at the end, no tokens remains. This implies that $n$ is a sum of non-negative multiples of $S$.

    In the other direction, if $n$ is a sum of non-negative multiples of $S$, we can write $n = \sum_{x \in S} n_x x$. A winning strategy for  Right  is simply to play $n_x$ times the move $x-1$ for each $x \in S$.
\end{proof}

\begin{remark}
In the case of impartial subtraction games (i.e. $\sl=\sr$), there is no known result about the complexity of this problem. This is surprising as these games have been thoroughly investigated in the literature.
\end{remark}

The second question that emerged from partizan subtraction games is the behavior of the outcome sequence, according to Definition~\ref{def:sequence}. It can also be formulated as a decision problem.\\

\noindent {\sc psg sequence} \\
\underline{Input:} two sets of integers $\sl$ and $\sr$\\
\underline{Output:} is the game $(\sl,\sr)$ $\SD$, $\WD$ (and not $\SD$), $\F$ or $\UI$ ?\\

Unlike {\sc psg outcome}, the algorithmic complexity is open for {\sc psg sequence}. The next sections will consider this problem for some particular cases. In addition, one can wonder whether the knowledge of the sequence could help to compute the outcome of a game position. The answer is no, even if the game is $\SD$:

\begin{proposition}\label{prop:preperiod}
Let $\sl=\{a_1,\ldots,a_n\}$ be such that $\gcd(a_1+1,\ldots,a_n+1)=1$, and let $\sr=\{1\}$. The game $(\sl,\sr)$ is $\SD$ for  Left  but computing the length of the preperiod is NP-hard.
\end{proposition}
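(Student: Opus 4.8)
The plan is to reduce the computation of the preperiod to computing the Frobenius number of the set $A=\{a_1+1,\dots,a_n+1\}$, whose NP-hardness is a known result. Write $\langle A\rangle$ for the numerical semigroup generated by $A$, i.e. the set of all non-negative integer combinations of $a_1+1,\dots,a_n+1$ (including $0$), and let $F$ denote its Frobenius number, the largest integer not in $\langle A\rangle$; this number is finite precisely because $\gcd(a_1+1,\dots,a_n+1)=1$.

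First I would exploit the fact that $\sr=\{1\}$ leaves Right with no genuine choice: whenever the heap is nonempty on Right's turn, Right must remove exactly one token, and Right loses only when facing the empty heap. Hence a round consisting of a move $a_i$ by Left followed by Right's forced reply removes exactly $a_i+1$ tokens. The key claim is
\[
o_L(p)=L \iff p+1\in\langle A\rangle .
\]
For $(\Leftarrow)$, if $p+1=\sum_j (a_{i_j}+1)$ then Left plays the moves $a_{i_1},\dots,a_{i_k}$ in turn; each of Right's forced replies removes one further token, and a short bookkeeping check shows the heap stays large enough for each Left move and is emptied exactly by Left's last move, so Right is the first player unable to move. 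For $(\Rightarrow)$, since Right never has a choice, Left wins going first only by reaching the empty heap on one of her own moves; counting the removed tokens over the forced play then shows the total equals $p$ with Left making the last move, which forces $p+1\in\langle A\rangle$.

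Next I would transfer this to the full outcome. Because Right's only first move is the forced $1$, one gets $o_R(p)=L$ iff $p=0$ or $o_L(p-1)=L$, that is $o_R(p)=L \iff p\in\langle A\rangle$; combined with the claim this yields, for every $p$,
\[
o(p)=\L \iff p\in\langle A\rangle \text{ and } p+1\in\langle A\rangle .
\]
Since $\gcd(a_1+1,\dots,a_n+1)=1$, the semigroup $\langle A\rangle$ contains every sufficiently large integer, so $o(p)=\L$ for all large $p$ and the game is $\SD$ for Left, as claimed. Moreover the largest $p$ with $o(p)\neq\L$ is exactly $p=F$: indeed $F\notin\langle A\rangle$ gives $o(F)\neq\L$, while every $p>F$ has $p,p+1\in\langle A\rangle$ and hence $o(p)=\L$. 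Therefore the length of the preperiod equals $F+1$.

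Finally, the reduction: given any instance $\{b_1,\dots,b_n\}$ of the Frobenius problem with $\gcd(b_1,\dots,b_n)=1$ (and, without loss of generality, all $b_i\geq 2$, the case $1\in\{b_i\}$ being trivial), set $\sl=\{b_1-1,\dots,b_n-1\}$ and $\sr=\{1\}$. By the above the preperiod of $(\sl,\sr)$ has length $F+1$, so a polynomial-time algorithm computing preperiods would compute Frobenius numbers in polynomial time; as the latter is NP-hard, so is the former. I expect the main obstacle to be the careful verification of the characterization $o_L(p)=L\iff p+1\in\langle A\rangle$, especially the converse, where one must argue that the forced nature of Right's play leaves Left no winning line other than emptying the heap, together with pinning down the boundary case so that the preperiod is identified with $F+1$ rather than an off-by-one neighbour.
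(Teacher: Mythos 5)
Your proof is correct and follows essentially the same route as the paper: both arguments rest on the observation that one round (a Left move $a_i$ followed by Right's forced reply of $1$) removes $a_i+1$ tokens, so that winning lines correspond to representations in the numerical semigroup generated by $\{a_1+1,\ldots,a_n+1\}$, the preperiod is located at its Frobenius number, and NP-hardness follows from the coin problem. Your write-up is if anything slightly more complete than the paper's (you give the exact characterizations $o_L(p)=L \iff p+1\in\langle A\rangle$ and $o_R(p)=L \iff p\in\langle A\rangle$ for every position, and spell out the final reduction explicitly); the only discrepancy is the harmless off-by-one convention for the preperiod length ($F+1$ positions versus the paper's identification of the preperiod with $F$ itself), which does not affect the hardness conclusion.
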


The proof will be based on the well-known {\sc coin problem} (also called Frobenius problem). \\

\noindent {\sc coin problem} \\
\underline{Input:} a set of $n$ positive integers $a_1,\ldots , a_n$ such that $\gcd(a_1,\ldots,a_n)=1$\\
\underline{Output:} the largest integer that cannot be expressed as a linear combination of $a_1,\ldots,a_n$.\\

This value is called the {\em Frobenius number}. For $n=2$, the Frobenius number equals $a_1a_2-a_1-a_2$ ~\cite{Sylvester}\footnote{Although not germane to this paper, Sylvester's solution is central to the strategy stealing argument that proves that naming a prime 5 or greater is a winning move in \textsc{sylver coinage}\cite{WW}, chapter 18.}. No explicit formula is known for larger values of $n$. Moreover, the problem has been proved to be NP-hard in the general case \cite{alfonsin}.

\begin{proof}
Under the assumptions of the proposition, we will show that the length of the preperiod is exactly the Frobenius number of $\{a_1+1,\ldots,a_n+1\}$. Indeed, let $N$ be the Frobenius number of $\{a_1+1,\ldots,a_n+1\}$. Then $N+1, N+2\ldots$ can be written as a linear combinations of $\{a_1+1,\ldots,a_n+1\}$. Note that in the game $(\sl,\sr)$, any round (sequence of two moves) can be seen as a linear combination of $\{a_1+1,\ldots,a_n+1\}$, as  Left  plays an $a_i$ and  Right  plays $1$. Hence if  Right  starts from $N+1$,  Left  follows the linear combination for $N+1$ to choose her moves, so as to play an even number of moves until the heap is empty. For the same reasons, if  Right  starts from $N+2$,  Left  has a winning strategy as a second player. Since  Right's first move is necessarily $1$, it means that  Left  has a winning strategy as a first player from $N+1$. Thus the position satisfies $o(N+1)=\L$. Using the same arguments, this remains true for all positions greater than $N+1$. In other words, it proves that the game is $\SD$ for  Left. Now, we consider the position $N$ and show that $o(N)\ne\L$. Indeed, assume that  Right  starts and  Left  has a winning strategy. It means that an even number of moves will be played. According to the previous remark, the sequence of moves that is winning for  Left  is necessarily a linear combination of $\{a_1+1,\ldots,a_n+1\}$. This contradicts the Frobenius property of $N$.
\end{proof}

This correlation between partizan subtraction games and the coin problem will be reused further in this paper.

\section{When $\sl$ is fixed}

In this section, we consider the case where $\sl$ is fixed and study the behaviour of the sequence when $\sr$ varies. In particular, we look for sets $\sr$ that make the game $(\sl,\sr)$ favorable for  Right. This can be seen as a prelude to the game where players would choose their sets before playing: if  Left  has chosen her set $\sl$, can  Right  force the game to be asymptotically more favorable for him?

\subsection{The case $|\sr|>|\sl|$}

If $\sr$ can be larger than $\sl$, then it is always possible to obtain a game favorable for  Right, as it is proved in the following theorem.

\begin{theorem}
Let $\sl$ be any finite set of integers. Let $p$ be the period of the impartial subtraction game played with $\sl$ and let $\sr=\sl\cup \{p\}$.
Then  Right  strongly dominates the game $(\sl,\sr)$, i.e., the game $(\sl,\sr)$ is ultimately $\R$.
\end{theorem}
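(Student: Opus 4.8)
The plan is to exploit the two features of $\sr=\sl\cup\{p\}$ simultaneously: since $\sr\supseteq\sl$, Right can imitate any Left move, and the single extra move $p$ — being the outcome period — lets Right shift the heap by a full period without changing the outcome of the underlying impartial game. Write $o_{\mathrm{imp}}$ for the outcome function of the impartial game $(\sl,\sl)$, each of whose positions is either a $\P$-position (every $\sl$-move, if any, leads to an $\N$-position; in particular the empty heap $0$ is a $\P$-position) or an $\N$-position (some $\sl$-move reaches a $\P$-position). By the ultimate periodicity of impartial subtraction games~\cite{Siegel}, there is an $n_0$ with $o_{\mathrm{imp}}(n)=o_{\mathrm{imp}}(n-p)$ for all $n\geq n_0+p$.

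Right's strategy will be to maintain the invariant that after each of his moves the heap is a $\P$-position of $(\sl,\sl)$. First I would show Right can \emph{establish} this invariant on his first move, from any heap $m\geq n_0+p$: if $m$ is an $\N$-position, Right plays the $\sl$-move reaching a $\P$-position; if $m$ is a $\P$-position, Right plays $p$, landing on $m-p$, which is again a $\P$-position by periodicity. (Here we implicitly use $p\notin\sl$, i.e.\ $\sr\neq\sl$, since otherwise the game is impartial and cannot be ultimately $\R$.)

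Next I would check that, once the invariant holds, Right can \emph{maintain} it using only moves from $\sl$, so that $p$ is needed at most once. After Right reaches a $\P$-position $q$, Left must play some $a\in\sl$; since every $\sl$-move out of a $\P$-position leads to an $\N$-position, the heap $q-a$ is an $\N$-position (and if Left has no legal move she has already lost). From this $\N$-position Right replies with the $\sl$-move to a $\P$-position, available since $\sl\subseteq\sr$. Crucially, this maintenance phase lives entirely inside the \emph{exact} recursive $\P/\N$ structure of the impartial game, so no appeal to periodicity is needed after the first move.

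Finally I would assemble the two cases. If Right moves first from a heap $n\geq n_0+p$, he establishes the invariant immediately; if Left moves first, she plays some $a\in\sl$ and Right establishes the invariant from $n-a$, which is still large enough provided $n\geq n_0+p+\max\sl$. In either case the heap strictly decreases and the game ends on Left's turn at a $\P$-position from which she cannot move, so Right wins. Hence $o(n)=\R$ for all $n\geq n_0+p+\max\sl$, which is exactly strong dominance for Right. The one delicate point — and the step I would be most careful about — is justifying the single use of the move $p$: one must confirm both that $m-p\geq 0$ and that $m-p$ lies in the periodic range, so that $o_{\mathrm{imp}}(m-p)=o_{\mathrm{imp}}(m)$; everything else reduces to the standard behaviour of $\P$- and $\N$-positions.
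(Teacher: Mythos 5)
Your proof is correct and is essentially the paper's argument: Right plays the first-player impartial strategy on $\N$-positions of $(\sl,\sl)$, and from a $\P$-position uses the move $p$ exactly once to reach another $\P$-position via periodicity, after which only $\sl$-moves are needed; the resulting bound $n \geq n_0+p+\max\sl$ matches the paper's. (Your parenthetical worry about $p\in\sl$ is in fact vacuous: if $p\in\sl$, a large $\P$-position of the impartial game would have the move $p$ to another $\P$-position, contradicting the definition of $\P$-positions, so $p\notin\sl$ holds automatically.)
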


\begin{proof}
Let $n_0$ be the preperiod of the impartial subtraction game played on $\sl$ and $m$ be the maximal value of $\sl$. We prove that  Right  wins if he starts on any heap of size $n> n_0+p$, which implies that the outcome on  $(\sl,\sr)$ is $\R$ for any heap of size $n> n_0+p+m$.

If $n$ is a $\N$-position for the impartial subtraction game on $\sl$, then  Right  follows the strategy for the first player, never uses the value $p$, and wins.

If $n$ is a $\P$-position,  Right  takes $p$ tokens, leaving  Left  with a heap of size $n-p> n_0$ which is, using periodicity, also a $\P$-position in the impartial game. After Left's move, we are in the case of the previous paragraph and Right wins.\end{proof}

Note that in the previous theorem, $\sr$ contains the set $\sl$, and thus has a large common intersection. We prove in the next theorem that if $\sr$ cannot contain any value in $\sl$, then it is still possible to have a game that is at least fair for  Right  (i.e., it contains an infinite number of $\R$-positions). Note that we do not know if for any set $\sl$, there is always a set $\sr$ with $|\sr| = |\sl| +1$ and $\sr\cap \sl =\emptyset$ that is (weakly or strongly) dominating for  Right.

\begin{theorem}
For any set $\sl$, there exists a set $\sr$ with $\sl\cap \sr=\emptyset$ and $|\sr| = |\sl| +1$ such that the resulting game contains an infinite number of $\R$-positions.
\end{theorem}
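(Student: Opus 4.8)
The plan is to engineer $\sr$ so that Right can \emph{pair off} each of Left's moves with a complementary move that removes a fixed total of tokens per round, and to give Right one extra move that lets him seize the same block by himself when he moves first. Concretely, fix a large integer $N$ and set
$$\sr = \{N\}\cup\{\,N-a : a\in \sl\,\}.$$
Here $N-a$ is Right's designated reply to Left's move $a$: a full round (Left's $a$ followed by Right's $N-a$) removes exactly $N$ tokens, while the distinguished element $N$ lets Right remove a whole block of size $N$ on his own. The target positions will be the multiples $tN$, and I will show $o(tN)=\R$ for all $t\ge 1$, giving infinitely many $\R$-positions.

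First I would dispose of the combinatorial constraints on $\sr$. Since the elements of $\sl$ are distinct and positive, the numbers $N-a$ are distinct and different from $N$, so $|\sr|=|\sl|+1$. Taking $N>2\max\sl$ forces every element of $\sr$ to exceed $\max\sl$ (indeed $N-a>2\max\sl-\max\sl=\max\sl$ and $N>\max\sl$), which simultaneously guarantees $\sr\cap\sl=\emptyset$ and that each $N-a>0$, so all of Right's intended moves are legal positive subtractions.

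Next I would analyze the two outcome functions at $n=tN$. For the Left-moving-first direction, Right simply follows the pairing strategy: whenever Left removes some $a\in\sl$, Right replies with $N-a\in\sr$. A short induction shows that after each completed round the heap is again a (nonnegative) multiple of $N$, strictly smaller than before, so the process terminates with the heap equal to $0$ on Left's turn; one also checks Right's reply is always affordable, since after Left's move the heap is at least $N-a$. Hence Left, moving first, cannot move when the heap reaches $0$ and loses, i.e.\ $o_L(tN)=R$. For the Right-moving-first direction, Right opens with the move $N$, reaching $(t-1)N$ with Left to move, which is exactly a position just shown to be lost for the player to move; thus $o_R(tN)=R$. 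Combining the two gives $o(tN)=\R$ for every $t\ge 1$.

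The genuine content of the argument is the choice of $\sr$ rather than any computation: the pairing idea \emph{forces} the inclusion of the $|\sl|$ values $N-a$, so the one remaining slot must be spent on Right's ``lead'' move $N$, which is why the budget $|\sl|+1$ is exactly tight. The main obstacle I anticipate is reconciling this rigid structure with the disjointness requirement $\sr\cap\sl=\emptyset$; this is precisely what the ``$N$ large'' choice resolves, by pushing all of $\sr$ above $\max\sl$. The only other delicate point is the bookkeeping that Right's replies are always legal and that the parity works out so that \emph{Right}, not Left, makes the last move, and both follow immediately from the invariant that the heap is a multiple of $N$ after each of Right's moves.
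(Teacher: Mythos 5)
Your proof is correct and is essentially the same as the paper's: both construct $\sr = \{N-a : a \in \sl\} \cup \{N\}$ for a suitably large $N$ and show every multiple of $N$ is an $\R$-position via Right's pairing replies $N-a$ (second player) and the opening move $N$ (first player). Your version is slightly more explicit than the paper's about how large $N$ must be ($N > 2\max\sl$) and about checking legality of Right's replies, but the idea and structure are identical.
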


\begin{proof}
Let $n$ be any integer such that the set $A=\{n-m,m\in \sl\}$ is a set of positive integers that is disjoint
from $\sl$. Putting $\sr = A \cup \{ n\}$ gives a set that satisfies the condition of the theorem and the game
 $(\sl,\sr)$.

 We claim that $o(kn)=\R$ for $k=1,2,\ldots$.
If  Left  starts on a position $kn$ by removing $m$ tokens, then  Right  can answer by taking $n-m$ tokens and leaves  $(k-1)n$ tokens, and by induction, Right wins.
If  Right  starts, he takes $n$ tokens and, again,  Left  has a multiple of $n$ and  loses.
\end{proof}

Consequently, if  Right  has a small advantage on the size of the set, he can ensure that the sequence of outcomes contains an infinite number of $\R$-positions. So having a larger subtraction set seems to be an important advantage. However, having a larger set is not always enough to guarantee
 dominance. Indeed, we have the following result:

\begin{theorem}
	\label{th+perstay}
	Let $G = (S_L, S_R)$ be a partizan subtraction game. Assume that $|S_L|\geq 2$ and that $G$ is eventually $L$, with preperiod at most $p$. Let $x_1, x_2 \in S_L$, with $x_1 < x_2$, and let $d$ be an integer with $d> p + \max(S_R \cup \{x_2 - x_1\})$, then $G_d = (S_L, S_R \cup \{d\})$ is eventually $L$ with preperiod at most $(d + x_2) \lceil\frac {d+x_2}{x_2 - x_1} \rceil$
\end{theorem}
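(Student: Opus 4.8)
The plan is to prove that every sufficiently large heap is a full Left win in $G_d$ by strong induction on the heap size $n$, exploiting that in $G$ every position $n \ge p$ already has outcome $\L$. Write $g = x_2 - x_1$ for the fine adjustment available to Left. The guiding principle is that the only genuinely new feature of $G_d$ compared with $G$ is the single large move $d$, and the hypothesis $d > p + \max(S_R \cup \{g\})$ is precisely what lets Left neutralise it.

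First I would set up the induction. Assume every position in $[M, n)$ has outcome $\L$ in $G_d$, where $M = (d+x_2)\lceil (d+x_2)/g\rceil$, and try to show $o_{G_d}(n) = \L$, i.e. that Left wins both moving first and moving second. If Left moves first she plays $x_1$ to reach $n - x_1 \ge M$, which is a Left win by induction and hence a loss for Right to move; so $o_L^{G_d}(n) = L$ is immediate once $n \ge M + x_2$. If Right moves first and plays some $r \in S_R$, then $r \le \max S_R < d$, so $n - r \ge M$ (after folding the lower-order slack into the base cases), and induction again finishes the job. The entire difficulty is thus concentrated in the single case where Right opens with the new move $d$.

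The heart of the proof is therefore the claim that after Right removes $d$ from a position $n \ge M$, Left, now to move first from $y = n - d$, still wins in $G_d$. Here $y$ may fall below the threshold $M$ (the critical window is $n \in [M, M+d)$, giving $y \in [M-d, M)$ with $M - d > x_2 > 0$), so induction does not apply and Left must win by hand. This is where the two Left moves $x_1, x_2$ enter: by choosing between them Left can change the heap by exactly $g$ per round, so over a bounded number of rounds she can steer the position onto a target she controls, while each round also lets her mirror any further $S_R$- or $d$-move of Right. Concretely I would have Left play a pairing/realignment strategy: she treats Right's $d$ as an event to be worked off and, using the freedom of $g$ per round, drives the configuration back onto the track of her $G$-winning strategy — legal throughout because $d > p$ keeps the relevant positions in the periodic $\L$-region of $G$, while $d > \max S_R$ and $d > g$ make Right's moves and Left's adjustments unambiguous. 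The counting then yields the bound: realignment takes at most $\lceil (d+x_2)/g\rceil$ rounds, each round costs at most $d + x_2$ tokens, so a runway of $M = (d+x_2)\lceil (d+x_2)/g\rceil$ tokens suffices, after which the induction closes and the outcome is constantly $\L$.

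The step I expect to be the main obstacle is exactly this realignment claim for $y = n - d$ in the window below $M$: one must exhibit an explicit Left strategy from $y$ that simultaneously answers every subsequent $S_R$-move and every further $d$-move of Right, and exploits the $x_1/x_2$ choice to reach a provably Right-losing position, all while remaining legal. Verifying that the reserve never runs out and that the number of realignment rounds is genuinely bounded by $\lceil (d+x_2)/g\rceil$ — which is what pins down the stated preperiod — is the delicate, computation-heavy part; everything else is the routine inductive bookkeeping sketched above.
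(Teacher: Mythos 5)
There is a genuine gap, and it is exactly the step you flag at the end: your ``realignment claim'' is the entire content of the theorem, and you never prove it. Note that your outer strong induction is only sound in the range where every relevant move stays at or above the threshold $M$ (roughly $n \geq M + d$, so that even Right's move $d$ lands back in $[M,n)$). The whole theorem is therefore concentrated in the base window of width about $d$ above $M$, together with the positions below $M$ that Right's move $d$ can reach; for all of these you offer only a description of what a strategy should accomplish (steer by $g = x_2 - x_1$ per round back onto the $G$-strategy, answer every further $d$, finish within $\lceil (d+x_2)/g \rceil$ rounds), not an actual strategy with a proof that it survives every interleaving of Right's replies. You say yourself that this is the ``delicate, computation-heavy part''; a sketch that isolates the crux and defers it is not a proof, and your counting of rounds mimics the stated bound without any mechanism behind it.

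The two ideas you are missing are the ones the paper's proof is built on. First, any position strictly below $d$ has the same outcome in $G_d$ as in $G$, since the move $d$ is unavailable there; combined with $d > p + \max(S_R \cup \{x_2 - x_1\})$, this makes every position in $[p+1, d-1]$ an $\L$-position of $G_d$ for free. These are the anchors your realignment would need to land on; you gesture at this ($d > p$ ``keeps positions in the $\L$-region of $G$'') but never use it. Second, instead of inducting on heap size directly, the paper proves a transfer claim by induction on $n$: in $G_d$, if Left wins on $n$ moving first (resp.\ second), then she wins on $n + d + x$ moving first (resp.\ second), for each $x \in S_L$. In its proof the dangerous replies are all resolved by Left answering $x$: if Right played $d$, this lands exactly on $n$, won by hypothesis; if Right played $y \in S_R$ with $y > n$, it lands on $n - y + d$, which lies in $(p, d)$ and is $\L$ by the first idea. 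Iterating the claim with the move $x_1$ adds $d + x_1$ to the position, i.e.\ decreases its residue modulo $d + x_2$ by $x_2 - x_1$, so starting from the anchors $[p+1, d-1]$ a short covering computation shows every residue class is reached within $\lceil (d+x_2)/(x_2 - x_1) \rceil$ applications --- which is where the preperiod bound $(d+x_2)\lceil (d+x_2)/(x_2-x_1)\rceil$ actually comes from. Without the transfer claim and the sub-$d$ anchors, your outline cannot be completed as written.
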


\begin{proof}
	Let $G$, $d$, $x_1$ and $x_2$ be as in the statement of the theorem.
	We first prove the following claim:
	
	\begin{claim}
			In the game $G_d$,
			if $o_L(n) = \L$  (resp. $o_R(n) = \L$) then Left  has a winning strategy on
			$n + (d+x)$ as first (resp. second player), with $x \in S_L$.
	\end{claim}
	\begin{proof}
		We will show the result by induction on $n$.
		
		First, assume $o_R(n) = \L$. We will show that there is a winning strategy for  Left  playing second on $n + d + x$. Starting from the position $n+d+x$, there are three possible cases:
		\begin{itemize}
			\item  Right  plays $y \in S_R$, with $y \leq n$. By the assumption on $n$,
Left  wins as first player on $n - y$, and using the induction hypothesis, he also wins as first player on
$n - y + d + x$. Therefore, Left wins as second player on $n+d+x$.
			\item  Right  plays $y \in S_R$, with $y > n$. Now Left  answers by playing $x$. This leads to the position $(n-y) + d$, with $(n-y) + d > p$ by assumption on $d$, and $n-y + d < d$ by assumption on $y$. Since $n-y + d < d$,  Right  can no longer play his move $d$, and the outcome of $G_d$ on $n-y+d$ is the same as the outcome of $G$ on this position. Since $n-y+d > p$  Left  wins playing second on this position.
			\item  Right  plays $d$, then  Left  answers by playing $x$, leading to the position $n$ on which  Left  wins as second player by assumption.
		\end{itemize}
		
		Suppose now that  Left  wins playing first on $n$, and let $y \in S_L$ be a winning move for  Left. Then  Left  wins playing second on $n-y$, and using the induction hypothesis, she wins playing second on $n-y + d + x$. Consequently, $y$ is a winning move for  Left  on $n + d + x$.
	\end{proof}	
	 For $i \geq 0$, denote by $X_i$ the set of integers $k < d +x_2$ such that the position $i (d + x_2) + k$ is $\L$ for $G_d$. To prove the theorem, it is enough to show that if $i$ is large enough, then $X_i = [0, x_2 + d [$. From the claim above, we know that $X_i \subseteq X_{i+1}$.
	
	Additionally, using the hypothesis on $d$, we have that $[p+1, d-1] \subseteq X_0$. Finally, we have the following property. For any $x \geq 0$, if $x \in X_i$, then $x - (x_2 - x_1) \mod (d+x_2) \in X_{i+1}$. Indeed, if $x \in X_i$, then $i (d+x_2) + x$ is an $\L$-position, and using the claim above, so is $i (d + x_2) + x + d + x_1 = (i+1) (d+x_2) + x -  (x_2 - x_1)$.
	
	Let $0 \leq x < d + x_2$, and write $(d - x) \mod (d + x_2) = \alpha (x_2 - x_1) + \beta$ the euclidian division of $(d-x) \mod (d + x_2)$ by $(x_2 - x_1)$. We have $0 < \beta \leq x_2 - x_1$, and $\alpha \leq \lceil \frac {d+x_2}{x_2 - x_1} \rceil$. This can be rewritten as:
	
	$$ x = (d - \beta) - \alpha (x_2 - x_1) \mod (d + x_2)$$
	
	Since we know that $d - \beta \geq p$ by assumption on $d$, we have that $(d - \beta) \in X_0$, and using the observation above, this implies that $x \in X_{\alpha} \subseteq X_{\lceil \frac {d+x_2}{x_2 - x_1} \rceil}$.
	
	Consequently, $G_d$ is ultimately $\L$, and the preperiod is at most $(d + x_2) \lceil\frac {d+x_2}{x_2 - x_1} \rceil$.
\end{proof}

By applying iteratively Theorem~\ref{th+perstay} with a game that is $\SD$ for  Left  (like the game of Example~\ref{ex:1213}), we obtain the following corollary.

\begin{corollary}
There are sets $\sl$ and $\sr$ with $|\sl| = 2$ and $|\sr|$ arbitrarily large such that $(\sl,\sr)$ is $\SD$ for  Left .
\end{corollary}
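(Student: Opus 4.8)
The plan is to iterate Theorem~\ref{th+perstay}, exploiting the crucial feature that the operation $G\mapsto G_d$ leaves $S_L$ untouched and enlarges $S_R$ by exactly one element, while preserving the property of being eventually $\L$ with a finite preperiod. Since $|S_L|$ never changes, starting from a two-element $\sl$ keeps $|\sl|=2$ forever, and each application raises $|\sr|$ by one. Thus the corollary is essentially an induction whose inductive step is a single invocation of the theorem.

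Concretely, I would take as base case the game of Example~\ref{ex:1213}, namely $\sl=\{1,2\}$ and $\sr^{(0)}=\{1,3\}$, which is $\SD$ for Left with preperiod at most $4$. Throughout I use $x_1=1$ and $x_2=2$, so that $x_2-x_1=1$. Suppose inductively that $G^{(k)}=(\sl,\sr^{(k)})$ is eventually $\L$ with finite preperiod $p_k$ and that $|\sr^{(k)}|=k+2$. Choose any integer $d_k$ with $d_k> p_k+\max(\sr^{(k)}\cup\{1\})$ and set $\sr^{(k+1)}=\sr^{(k)}\cup\{d_k\}$. Since $d_k$ exceeds every element of $\sr^{(k)}$, it is genuinely new, so $|\sr^{(k+1)}|=k+3$, and since $d_k>2$ it lies outside $\sl=\{1,2\}$ as well. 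The hypotheses of Theorem~\ref{th+perstay} are now satisfied (in particular $|\sl|=2\geq 2$ and $d_k$ meets the required lower bound), so $G^{(k+1)}$ is again eventually $\L$, with preperiod bounded by $(d_k+2)\lceil (d_k+2)/(x_2-x_1)\rceil=(d_k+2)^2$.

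The key point---and the only thing that needs checking beyond a mechanical substitution---is that this bound is \emph{finite}, which is exactly what permits the induction to continue: the conclusion of Theorem~\ref{th+perstay} reproduces its own hypothesis (eventually $\L$ with finite preperiod, over an unchanged $S_L$ of size $2$). I would therefore set $p_{k+1}=(d_k+2)^2$ and reapply the theorem. After $t$ iterations we obtain $\sl=\{1,2\}$ and a set $\sr^{(t)}$ with $|\sr^{(t)}|=t+2$ such that $(\sl,\sr^{(t)})$ is eventually $\L$, i.e.\ $\SD$ for Left in the sense of Definition~\ref{def:sequence}; letting $t$ grow makes $|\sr|$ arbitrarily large while keeping $|\sl|=2$, which is the claim. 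There is no genuine obstacle here, only one subtlety worth stating explicitly: that ``eventually $\L$'' as used in Theorem~\ref{th+perstay} coincides with $\SD$ for Left (the period is the single symbol $\L$), so that both the base case and the output of every iteration are indeed $\SD$ games, and the inductive hypothesis is literally reproduced at each step.
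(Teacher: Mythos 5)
Your proof is correct and follows exactly the paper's route: the paper obtains this corollary precisely by iterating Theorem~\ref{th+perstay} starting from the $\SD$ game of Example~\ref{ex:1213}, which is what you do, with the added benefit of spelling out the choice of $d_k$ and the finite preperiod bound $(d_k+2)^2$ that keeps the induction going. No gaps; your write-up is just a more explicit version of the paper's one-line argument.
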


\begin{remark}
    The condition on $d \geq p + \max(S_R \cup \{x_2 - x_1\})$ in Theorem~\ref{th+perstay} is optimal.
    Indeed, take $\sl=\{c,c+1\}$ and $\sr=\{1\}$. As seen in the proof of Proposition \ref{prop:preperiod}, the game $(\sl,\sr)$ is $\SD$ for  Left, with preperiod the Froebenius number of $\{c+1,c+2\}$, which is $p=c^2+2c-1=c(c+1)-1$.
    Thus, by Theorem~\ref{th+perstay}, the game $(\{c,c+1\},\{1,d\})$ with $d>c(c+1)$ is also $\SD$ for  Left. But, as proved in Example \ref{ex:intro}, this is not true for $d=c(c+1)$ since the game is then $\F$.
\end{remark}

\subsection{The case $|\sr|\leq |\sl|$}

We first consider the case $\sl=\{1,\ldots,k\}$ and prove that the game is always favorable to Left
and that $\sl$ strongly dominates in all but a few cases.

\begin{lemma}\label{lem:1kk}
Let $\sl = \{1, \ldots, k\}$, and $|\sr| = k$, then:
\begin{enumerate}
\item If $\sr = \{ c+1, c+2, \ldots c+k\}$ for some integer $c$, then  Left  weakly dominates if $c>0$ and the game is impartial if $c=0$,
\item otherwise,  Left  strongly dominates.
\end{enumerate}
\end{lemma}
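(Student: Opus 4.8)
The plan is to translate the recursive definition of the outcomes into two coupled subsets of the non-negative integers and then read off the period directly. Writing $f(n)=o_L(n)$ and $g(n)=o_R(n)$, let $D=\{n : f(n)=R\}$ be the set of positions from which Left \emph{loses} moving first, and $W=\{n : g(n)=R\}$ the set from which Right \emph{wins} moving first. Since $\sl=\{1,\dots,k\}$, Left loses moving first from $n$ exactly when every reachable position is a Right-win, which together with $f(0)=R$ gives $n\in D$ iff $n=0$ or ($n>k$ and $\{n-k,\dots,n-1\}\subseteq W$); and Right wins moving first iff he can reach a position in $D$, i.e. $W=(D+\sr)\cap\mathbb{Z}_{\ge 0}$, where $D+\sr=\{d+b: d\in D,\ b\in\sr\}$. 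The four outcomes are then recovered by membership: $n$ is $\L$ iff $n\notin D\cup W$, $\N$ iff $n\in W\setminus D$, $\P$ iff $n\in D\setminus W$, and $\R$ iff $n\in D\cap W$. Since $0\in D$ and every $n\le k$ fails the window condition (its window contains $0\notin W$), we always have $\{0\}\subseteq D$ and $\sr\subseteq W$, and the whole behaviour is governed by whether $D$ ever acquires further elements.

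For the case where $\sr$ is \emph{not} a set of $k$ consecutive integers, I claim $D=\{0\}$ and $W=\sr$ forever, which immediately yields strong dominance. The key is a bootstrapping obstruction: a new element of $D$ requires a window $\{n-k,\dots,n-1\}$ of $k$ consecutive integers inside $W$. I would prove by strong induction that $D\cap[0,n-1]=\{0\}$ forces $W\cap[0,n-1]=\sr\cap[0,n-1]$; then $n\in D$ with $n>k$ would require $k$ consecutive integers contained in $\sr$, and since $|\sr|=k$ this would make $\sr$ exactly that block, contradicting the hypothesis. Hence $D=\{0\}$ and $W=\sr$, so every $n>\max\sr$ lies outside $D\cup W$ and has outcome $\L$: Left strongly dominates.

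For $\sr=\{c+1,\dots,c+k\}$ the set $\sr$ is itself a run of $k$ consecutive integers, so the obstruction disappears and $D$ bootstraps. Here I would show by induction that $D=\{m(c+k+1):m\ge 0\}$ and correspondingly $W=\bigcup_{m\ge 0}[\,m(c+k+1)+c+1,\ m(c+k+1)+c+k\,]$. Each $d\in D$ contributes to $W$ exactly the block $d+\sr$ of length $k$; consecutive blocks are separated by a gap of size $c+1\ge 1$, so the only window of $k$ consecutive integers inside $W$ is a full block, forcing the next element of $D$ to be $d+(c+k+1)$ and no smaller value. Reading residues modulo the period $P=c+k+1$, position $0$ is $\P$ (in $D$, not $W$), positions $1,\dots,c$ are $\L$ (in neither), and positions $c+1,\dots,c+k$ are $\N$ (in $W$, not $D$), so the period is $\P\,\L^{c}\,\N^{k}$.

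Finally I would conclude from this period: when $c=0$ the period is $\P\,\N^{k}$, containing neither $\L$ nor $\R$, so the game is $\UI$ (indeed it is the impartial subtraction game on $\{1,\dots,k\}$); when $c>0$ the period contains $c\ge 1$ occurrences of $\L$ and no $\R$, which is exactly weak dominance for Left. The only delicate point, and where I expect to spend the most care, is the induction in the block case: one must verify that no window of $k$ consecutive integers can straddle the gap between two blocks, which needs the gap size $c+1$ to be positive, so that $D$ contains no spurious element strictly between consecutive multiples of $P$. The non-block case, by contrast, is immediate once the recursion is set up, since $W$ can never escape $\sr$.
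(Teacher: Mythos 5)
Your proof is correct and follows essentially the same route as the paper: an induction on the heap size establishing the pure period $\P\,\L^{c}\,\N^{k}$ in the consecutive case, and, otherwise, the observation that a positive position from which Right wins moving second would require $k$ consecutive integers inside $\sr$, which is impossible when $|\sr|=k$ and $\sr$ is not a block. The only difference is bookkeeping: where the paper argues via a minimal counterexample and then appeals to Remark~\ref{rem:seq_interdites} to rule out $\N$-positions in the period, your exact computation of the sets $D$ and $W$ (in particular $W=\sr$) bounds the $\N$-positions explicitly, which is a slightly more self-contained way to reach the same conclusion.
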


\begin{proof}
\begin{enumerate}
\item In this case, the game is purely periodic, with period $\P \L^{c} \N^{k}$. This can be proved by induction on the size of the heap $n$. If $0<n\leq c$, only  Left  can play and the game is trivially $\L$.
Otherwise, let $x=n\bmod {c+k+1}$. If $x=0$, then if the first player removes $i$ tokens, the second player answers by removing $c+k+1-i$ tokens, leading to the position $n-c-k-1$ which is $\P$ by induction, and so is $n$.
If $0<x<c+1$, when  Left  starts she takes one token, leading to a $\L$ or a $\P$-position, and wins. If she is second, she plays as before to $n-c-k-1$ which is a $\L$-position.
Finally, if $x\geq c+1$, both players win playing first by playing $x-c$ for  Left  and $x$ for  Right.

\item We show that if  $n > 0$ is such that  Right  wins playing second on $n$, this implies that $\sr$ contains $k$ consecutive integers. Let $n_0$ be the smallest positive integer such that $o_L(n_o)=R$.
 We know that $n_0 > k$ since otherwise  Left  can win playing first by playing to zero. Since  Right  has a winning strategy playing second then  Right  has a winning first move on all the position $n-i$ for $1 \leq i \leq k$. This means that for each of these positions,  Right  has a winning move to some position $m_i$ where $o_L(m_i)=R$. By minimality of $n_0$, this implies that $m_i =0$, and consequently $n-i \in \sr$ for all $1 \leq i \leq k$.
Consequently, if $\sr$ does not contain $k$ consecutive integers, there is no position $n>0$ such that  Right  wins playing second. In particular, there is no $\R$ nor $\P$-positions in the period. By Remark \ref{rem:seq_interdites}, this implies that the period only contains $\L$-positions, meaning that the game is strongly dominating for  Left.
\end{enumerate}
\end{proof}

The set $\sl = \{ 1, \ldots, k\}$ is somehow optimal for  Left, since the exceptions of strongly domination for  Left  in the previous lemma appear for any set of $k$ elements:

\begin{lemma}
    For any set $\sl$, there is a set $\sr$ with $|\sr| = |\sl|$ and $\sr\cap\sl=\emptyset$ such that  Left  does not strongly dominate.
\end{lemma}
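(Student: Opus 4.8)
The plan is to reuse the mirroring idea from the preceding $|\sr|=|\sl|+1$ theorem, but to drop its extra generator so that the sizes match. Write $\sl=\{a_1,\dots,a_k\}$ and fix any integer $n>2\max(\sl)$; set $\sr=\{\,n-a : a\in\sl\,\}$. Since the $a_i$ are distinct we get $|\sr|=k=|\sl|$, and every element $n-a$ satisfies $n-a\ge n-\max(\sl)>\max(\sl)\ge 1$, so $\sr$ consists of positive integers all strictly larger than $\max(\sl)$; in particular $\sr\cap\sl=\emptyset$. This $\sr$ is exactly the complement set $A$ from the earlier theorem, now taken to be the whole of $\sr$ rather than $A\cup\{n\}$.

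The core claim is that $o_L(kn)=R$ for every $k\ge 1$, i.e. that Left loses moving first on every positive multiple of $n$. I would prove this by induction on $k$, the base case $k=0$ being the empty heap, on which the player to move loses. For the inductive step, suppose Left moves first on $kn$ and removes some $a\in\sl$, reaching $kn-a$. Then Right answers with $n-a\in\sr$, a legal move because $0<n-a\le kn-a$ for $k\ge 1$; this returns the heap to $(k-1)n$ with Left again to move, where by the induction hypothesis Left loses moving first. Since this works whatever $a$ Left chooses, every first move of Left is losing, so $o_L(kn)=R$.

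Consequently the positions $n,2n,3n,\dots$ all have outcome different from $\L$, so the outcome sequence is not eventually $\L$ and Left does not strongly dominate, as required. There is no genuine obstacle beyond this bookkeeping; the one point worth flagging is why we only obtain $o_L(kn)=R$ and not the stronger $o(kn)=\R$ of the earlier result. If Right moves first on $kn$ he can only reach $(k-1)n+a$ for some $a\in\sl$, which is not a multiple of $n$, so the pairing strategy does not loop for the first player — this is precisely the role of the extra generator $n$ in the $|\sr|=|\sl|+1$ theorem. Since breaking strong dominance merely requires infinitely many non-$\L$ positions, discarding that generator costs us nothing here.
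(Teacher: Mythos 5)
Your proposal is correct and matches the paper's proof essentially verbatim: the paper also takes $\sr = n_0 - \sl$ for $n_0$ large enough to force disjointness, and observes that Right wins playing second on every multiple of $n_0$ via the pairing response $n_0 - a$ to Left's move $a$. Your added induction and the remark about why only $o_L(kn)=R$ (rather than $o(kn)=\R$) is obtained are just explicit bookkeeping of what the paper leaves implicit.
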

\begin{proof}
Let $\sr=n_0-\sl$ for an integer $n_0$ larger than all the values of $\sl$ and such that $\sr\cap \sl =\emptyset$. Then  Right  wins playing second in all the multiples of $n_0$.
\end{proof}

\section{When one set has size 1}

We now consider the case where one of the set, say $\sr$ has size 1. As seen in Section \ref{sec:complexity}, the study of the game is closely related to {\sc Unbounded Knapsack Problem} and to the coin problem. Indeed,  Right  does not have any choice and thus the result is only depending on the possibility or not for $n$ to be decomposed as a combination of the values in $\sl+\sr$. Our aim in this section is to exhibit the precise periods.

\subsection{Case $|\sl|=|\sr|=1$}

In this really particular case, the game is always $\WD$ for the player that have the smallest integer.

\begin{lemma}\label{lem:one_v_one}
Let $S_L=\{a\}$ and $S_R=\{b\}$ with $a< b$. The outcome sequence of $S=(S_L,S_R)$ is
purely periodic, the period length is $a+b$ and the period is $\P^a\L^{b-a}\N^a$. In particular, the game is weakly dominating for  Left.
\end{lemma}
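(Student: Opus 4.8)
The plan is to reduce the whole analysis to the two single-move recurrences forced by the fact that each player has exactly one legal move. Left moving first from $n$ can only go to $n-a$ (when $n\ge a$), after which it is Right's turn; hence $o_L(n)=L$ iff $n\ge a$ and $o_R(n-a)=L$, and $o_L(n)=R$ otherwise. Dually, $o_R(n)=R$ iff $n\ge b$ and $o_L(n-b)=R$, and $o_R(n)=L$ otherwise. Since these rules only look back by $a$ or by $b$, with $a<b<a+b$, the natural tool is strong induction with step $p:=a+b$.

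Concretely, I would prove by strong induction on $n$ the single statement that $o(n)=\P$ when $n\bmod p\in[0,a-1]$, that $o(n)=\L$ when $n\bmod p\in[a,b-1]$, and that $o(n)=\N$ when $n\bmod p\in[b,p-1]$. The base cases $0\le n\le p-1$ are immediate from the recurrences: if $n<a$ neither player can move, so $o_L(n)=R$ and $o_R(n)=L$, a $\P$-position; if $a\le n<b$ only Left can move and her move leaves a heap $n-a<b$ from which Right cannot reply, giving $o_L(n)=L$ and $o_R(n)=L$, an $\L$-position; and if $b\le n<p$ both can move, $n-a<b$ is still a heap where Right-to-move loses while $n-b<a$ is a heap where Left-to-move loses, giving $o_L(n)=L$ and $o_R(n)=R$, an $\N$-position.

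For the inductive step with $n\ge p$, both $n-a$ and $n-b$ are nonnegative, so I feed them to the induction hypothesis through their residues. Writing $r=n\bmod p$, in each of the three ranges for $r$ one computes $(r-a)\bmod p$ and $(r-b)\bmod p$ and reads off the claimed value; for instance, when $r\in[0,a-1]$ the predecessor $n-a$ has residue $r+b\in[b,p-1]$ (an $\N$-position, so $o_R(n-a)=R$), which forces $o_L(n)=R$, while $n-b$ has residue $r+a$ whose outcome has $o_L=L$, forcing $o_R(n)=L$; thus $n$ is a $\P$-position. The two remaining ranges are handled the same way, yielding that the sequence is purely periodic with period exactly $p$. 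Finally, since the period $\P^a\L^{b-a}\N^a$ contains an $\L$ (as $b-a\ge1$) and no $\R$, the game is weakly dominating for Left.

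The main obstacle is entirely bookkeeping at the block boundaries, made delicate by the fact that we do not assume $b\ge 2a$: depending on the relative sizes of $a$ and $b$, a predecessor's residue $(r-b)\bmod p$ can fall in either the $\L$-block or the $\N$-block, and $(r-a)\bmod p$ can straddle the $\P/\L$ boundary, so a naive case analysis seems to branch. The observation that removes the branching is that each recurrence consults only one coordinate of the predecessor: $o_L(n)$ depends on $o_R(n-a)$, and $o_R=L$ holds throughout both the $\P$-block and the $\L$-block; dually $o_R(n)$ depends on $o_L(n-b)$, and $o_L=L$ holds throughout both the $\L$-block and the $\N$-block. Hence whenever a predecessor's residue is ambiguous between two adjacent blocks, the coordinate actually used is constant across them, and the induction closes with no case split on whether $b<2a$.
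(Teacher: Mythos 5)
Your proof is correct, but its inductive mechanism differs from the paper's. The paper splits the argument in two: first a strategy-transfer step showing that whichever player can win on $n$ (moving first or second) can also win on $n+a+b$, because from $n+a+b$ the next two moves are forced and remove exactly $a+b$ tokens, returning to $n$ with the same player to move; then it finishes by tabulating the outcomes over one initial period. You instead work with the one-move recurrences $o_L(n)=L \iff \bigl(n\ge a \text{ and } o_R(n-a)=L\bigr)$ and $o_R(n)=R \iff \bigl(n\ge b \text{ and } o_L(n-b)=R\bigr)$, and verify the explicit pattern $\P^a\L^{b-a}\N^a$ by strong induction on residues mod $a+b$. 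The price of recursing one move at a time is that a predecessor's residue shifts by $a$ or $b$ and can land in different blocks depending on whether $b<2a$ or $b\ge 2a$; your closing observation --- that each recurrence reads only one coordinate, with $o_R=L$ constant across the $\P$- and $\L$-blocks and $o_L=L$ constant across the $\L$- and $\N$-blocks --- is exactly what makes the case analysis uniform, and all three residue classes do close under it (I checked the two cases you left implicit). In exchange for this bookkeeping, your route is a single self-contained induction that exhibits structurally why the period has this tri-block shape, whereas the paper's round-transfer argument gets pure periodicity with no residue arithmetic at all but needs the separate base-case table.
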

\begin{proof}

We prove that for all $n \geq 0$, if one of the player has a winning move playing first (resp. second) on $n$, then he also has one playing first (resp. second) on $n+a+b$. Indeed, suppose for example that  Left  has a winning move on position $n$ playing first (the other cases are treated in the same way). If  Left  plays first on position $n+a+b$, then after two moves, it's again  Left 's turn to play, and the position is now $n$, and  Left  wins the game.

The result then follows from computing the outcome of the positions $n \leq a+b$. These outcomes are tabulated in \Cref{tab+1vs1}.

\begin{table}[htp]
\begin{center}
\begin{tabular}{c|c|c|c}
Heap sizes      &    Left  move range &   Right  move range    &   Outcome\\ \hline
$[0,a-1]$       &    no moves       &    no moves           &    $\P$\\
$[a,b-1]$       &    $[0,b-a-1]$      &    no moves           &    $\L$\\
$[b,b+a-1]$     &   $[b-a,b-1]$     &   $[0,a-1]$           &   $\N$\\
$[b+a, b+2a-1]$ &   $[b,b+a-1]$     &   $[a,2a-1]$          &   $\P$\\
$[b+2a,2b+2a-1]$&   $[b+a,2b+a-1]$  &   $[2a,b+2a-1]$       &   $\L$\\
\end{tabular}
\end{center}
\caption{Outcomes with $S_L=\{a\}$ and $S_R=\{b\}$ for first values}
\label{tab+1vs1}
\end{table}
\end{proof}


\subsection{Case $|\sl|=2$ and $|\sr|=1$}

In these cases, we are able to give the complete periods.

\begin{theorem}
\label{th+sizeOne}
Let $a, b$ and $c$ be three positive integers, and let $g = \gcd(a+c, b+c)$. The game $(\{a,b\},\{c\})$ is:
\begin{itemize}
\item strongly dominated by  Left  if $g \leq c$,
\item weakly dominated by  Left   with period $(\P^{g-c} \L^{2c - g} \N^{g-c})$ if $c < g <2c$,
\item ultimately impartial with period $(\P^c \N^ c)$ if $g = 2c$ ,
\item weakly dominated by  Right  with period $(\P^c \R^{g - 2c} \N^c)$ if $g > 2c$ .
\end{itemize}

\end{theorem}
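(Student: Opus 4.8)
The plan is to track, for each heap size $n$, the two Boolean quantities $f(n)=[\,o_L(n)=L\,]$ (``Left moving first wins'') and $h(n)=[\,o_R(n)=R\,]$ (``Right moving first wins''); the four outcomes are exactly the four pairs $(f,h)$, namely $\L=(1,0)$, $\R=(0,1)$, $\N=(1,1)$, $\P=(0,0)$. Assume without loss of generality $a<b$ and set $A=a+c$, $B=b+c$. The crucial structural point is that Right has the single move $c$: this forces $h(n)=1$ iff $n\ge c$ and $f(n-c)=0$, and substituting this into the condition for a Left win yields the clean recurrence $f(n)=1$ iff $n\in I$, or $f(n-A)=1$, or $f(n-B)=1$, where $I=[a,a+c)\cup[b,b+c)$ collects the positions from which Left can move to a heap smaller than $c$ (leaving Right stuck), and with the convention $f(m)=0$ for $m<0$. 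Unfolding the recurrence gives the exact description that $f(n)=1$ iff $n-w\in I$ for some $w\in\langle A,B\rangle$, the numerical semigroup generated by $A$ and $B$.

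Next I would feed this into the coin problem. Since $g\mid A$ and $g\mid B$, we have $a\equiv b\equiv -c\pmod g$, so the residues mod $g$ occupied by $I$ are exactly $\{a,a+1,\dots,a+c-1\}\equiv\{g-c,\dots,g-1\}\pmod g$, the top $c$ residues (all residues once $c\ge g$). By Sylvester's theorem applied to the coprime pair $A/g,\,B/g$, the set $\langle A,B\rangle$ contains every sufficiently large multiple of $g$; hence for all large $n$ one has $f(n)=1$ iff $(n\bmod g)\in T$, where $T:=\{g-c,\dots,g-1\}\pmod g$. Consequently $f$ is eventually periodic of period $g$, and since $h(n)=1-f(n-c)$ for large $n$, so is $h$: explicitly $h(n)=1$ iff $(n\bmod g)\notin T+c$, with $T+c=\{0,1,\dots,c-1\}\pmod g$.

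It then remains to read off the outcome at a large position of residue $r$ from the pair $\bigl([\,r\in T\,],\,[\,r\notin T+c\,]\bigr)$ and to list the residues $r=0,1,\dots,g-1$ in order. A short computation of $T\cap(T+c)$ (giving $\L$), $\overline T\cap(T+c)$ (giving $\P$), $T\cap\overline{T+c}$ (giving $\N$) and $\overline T\cap\overline{T+c}$ (giving $\R$) splits into the four stated regimes: if $g\le c$ then $T$ is all of $\mathbb{Z}/g$, so every large $n$ is $\L$ and the game is $\SD$ for Left; if $c<g<2c$ the ordered residues produce $\P^{g-c}\L^{2c-g}\N^{g-c}$; if $g=2c$ they produce $\P^c\N^c$; and if $g>2c$ they produce $\P^c\R^{g-2c}\N^c$, which are exactly the announced cyclic words.

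The main obstacle is the second step: establishing the \emph{exact} characterization of $f$ (in particular that Left has no winning first move except through the translates of $I$ by $\langle A,B\rangle$) and then pinning down the eventual winning set $T$ precisely. Both hinge on the one-move restriction for Right, which makes every round increment lie in $\{A,B\}$, together with the congruence $a\equiv b\equiv -c\pmod g$ and the Frobenius/coin input identifying the representable residues. Once $T$ and $T+c$ are known, the remaining case analysis is routine bookkeeping over the $g$ residue classes.
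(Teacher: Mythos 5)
Your proposal is correct, and it reaches all four regimes (including the exact period words) by a route whose organization genuinely differs from the paper's, even though both rest on the same number-theoretic core: a full round removes $A=a+c$ or $B=b+c$, both divisible by $g$, and Sylvester's theorem for the coprime pair $A/g, B/g$ makes every large multiple of $g$ representable. The paper argues strategically: it first proves the claim that $(n \bmod g) < c$ implies Left wins moving second (Left literally follows a coin-problem decomposition of the multiple of $g$), and then establishes three further properties by explicit play — Left wins first when $r \ge g-c$ by playing $a$ and reducing to the claim; Right wins first when $r \ge c$ because after every round the position stays $\equiv r \pmod g$, so his move $c$ is never blocked; Right wins second when $r < g-c$ because any Left move lands in the previous case. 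You instead exploit the fact that $|\sr|=1$ to eliminate Right symbolically ($h(n)=1$ iff $n\ge c$ and $f(n-c)=0$), obtaining the one-variable recurrence $f(n)=1$ iff $n\in I$ or $f(n-A)=1$ or $f(n-B)=1$, which you solve exactly: $f(n)=1$ iff $n \in I + \langle A,B\rangle$. This is a stronger intermediate result than anything the paper proves here — it characterizes Left's first-player wins for \emph{all} $n$, not just large $n$, so it would also yield explicit preperiod bounds in terms of the Frobenius number of $\{A/g, B/g\}$, tying in with Proposition~\ref{prop:preperiod}, which the paper handles by a separate argument for $\sr=\{1\}$. The paper's strategic proof is shorter and exhibits the winning strategies explicitly; yours is more mechanical and self-contained (no separate argument needed for each of the four first/second-player properties, since all of them fall out of the sets $T$ and $T+c$), at the cost of the bookkeeping in the unfolding step. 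Your final residue-class readoff ($T\cap(T+c)$ giving $\L$, etc.) is the same case analysis the paper performs, and it matches the stated periods in all four cases.
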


\begin{proof}
Throughout this proof we write $n = qg + r$, with $0 \leq r < c$.

We start by proving the following claim which holds in all four cases.
\begin{claim}
 If $(n\bmod g)<c$  then $o_R(n)=L$ for large enough $n$.
\end{claim}

\begin{proof}
After both players play once, the number of tokens decreased by either $a+c$ or $b + c$ depending on which move  Left   played. By the results on the coin problem, we know that if $q$ is large enough, then $qg$ can be written as $\alpha (a + c) + \beta (b +c)$, with $\alpha$ and $\beta$ two non-negative integers. If  Left   is playing second, a strategy can be to play $a$ $\alpha$ times, and $b$ $\beta$ times. After these moves, it is Right's turn to play, and the position is $r < c$. Consequently  Right  now has no move and loses the game.
\end{proof}

We will now use this claim to prove the result in the four different cases.

For the first case, we have $g \leq c$. For any integer $n$, we have $(n \mod g) < g \leq c$. Consequently, by the claim above, there is an integer $n_0$ such that for any $n \geq n_0$,  $o_R(n)=L$. This also implies that for any $n \geq n_0 + a$,  $0_L(n)=L$  since she plays to $n-a>n_0$ and, by the claim, $o_R(n-a)=L$.
Thus the outcome is $\L$ for any position $n$ large enough.

For the three remaining cases, we will show that the following four properties holds when $n$ is large enough. The result of the theorem immediately follows from these four properties.
\begin{enumerate}
\item \label{proof+abc+1} if $r < c$, then  Left   wins playing second,
\item \label{proof+abc+2} if $r \geq g - c$, then  Left   wins playing first,
\item \label{proof+abc+3} if $r \geq c$, then  Right  wins playing first,
\item \label{proof+abc+4} if $r < g-c$, then  Right  wins playing second.
\end{enumerate}
We now prove these four points:
\begin{enumerate}
\item This point is exactly the claim above.
\item If $r \geq g -c$, and $n$ is large enough, then  Left   can play $a$. The position after the move is such that $n-a \equiv r-a \equiv r+c \mod g$. Moreover, since $g-c \leq r < g$, we know that $ g \leq r +c< g+c$. From~\cref{proof+abc+1}, we know that $o_R(n)=L$   if $n-a$ is large enough, so  Left has a winning strategy as a first player if $r \geq g -c$.

\item If $r \geq c$, and  Right  plays first, then whatever  Left   plays, after an even number of moves,  Right  still has a move available. Indeed, let $n'$ be the position reached after an even number of moves. The number of tokens removed, $n -n'$ is a multiple of $g$. Consequently, $n' = (n \mod g)$. Since $(n \mod g) \geq c$, this implies that $n' \geq c$, and  Right  can play $c$. This proves that  Right  will never be blocked, and  Left   will eventually lose the game.

\item Finally, if $r < g-c$, then  Left   playing first can move to a position $n'$ equal to either $n-a$ or $n-b$. Since $a \equiv b \equiv -c \mod g$, in both cases, we have $n' \equiv r +c \mod g$. Since $c \leq r + c < g$, by the argument above, we know that  Right  playing first on $n'$ wins. Consequently,  Left   playing first on $n$ loses.

\end{enumerate}
\end{proof}

When $c>b$ and $b\geq 2a$, which is included in the first case, we know the whole outcome sequence. This will be useful in next Section.

\begin{theorem}
	\label{thm:abc}
	The outcome sequence of the game $(\{a, b\}, \{c\})$, with $c > b$ and $b \geq 2a$ is the following:
	$$ \P^a\L^{c-a}\N^a\L^{\infty}$$
\end{theorem}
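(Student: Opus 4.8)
The plan is to pin down the outcome of every heap by first computing the four initial regions directly and then extending to the tail by strong induction. The key structural fact to exploit is that Right, whose only move removes $c$ tokens, is completely frozen on any heap of size less than $c$; since $c > b \ge 2a$, the whole behaviour is governed by where Left's two moves land relative to the threshold $c$.

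First I would establish the outcomes on $[0, c+a-1]$ by hand. On $[0, a-1]$ neither player can move, so each heap is a $\P$-position. On $[a, c-1]$ Right still cannot move, while Left, removing $a$, leaves a heap below $c$ on which it is Right's (losing) turn; hence $o_L = o_R = L$ and the region is $\L$. On $[c, c+a-1]$, Right moving first takes $c$ and reaches $[0,a-1]$, a $\P$-position, so $o_R = R$; meanwhile Left removing $a$ reaches $[c-a, c-1] \subseteq [a, c-1]$ (using $c > 2a$), an $\L$-position, so $o_L = L$, giving $\N$. From this computation I read off the two facts that drive the induction: among the heaps determined so far, $o_L(m) = L$ holds exactly when $m \ge a$, and $o_R(m) = R$ holds exactly when $m \in [c, c+a-1]$.

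For the tail $n \ge c+a$ I would argue by strong induction that $o(n) = \L$. For $o_R(n) = L$: Right's only legal move reaches $n - c \ge a$, where $o_L = L$ by the facts above (and the induction hypothesis $o(n-c)=\L$ when $n - c \ge c+a$), so Right loses moving first. For $o_L(n) = L$: Left may move to $n-a$ or $n-b$ (both valid, since $n > c > b$), and she wins unless both land in the unique interval $[c, c+a-1]$ where $o_R = R$. The main obstacle, and the only place the hypothesis $b \ge 2a$ is essential, is ruling this out: if both $n-a$ and $n-b$ lay in an interval of length $a$, then $b - a = (n-a)-(n-b) \le a - 1$, contradicting $b \ge 2a$. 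Hence at least one of Left's moves reaches a position with $o_R = L$, so $o_L(n) = L$, completing the induction and yielding the outcome sequence $\P^a \L^{c-a} \N^a \L^{\infty}$.
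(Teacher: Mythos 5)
Your proof is correct and follows essentially the same route as the paper's: compute the outcomes on $[0,a-1]$, $[a,c-1]$, and $[c,c+a-1]$ directly, then show by induction that every $n \geq c+a$ is $\L$, with the hypothesis $b \geq 2a$ used exactly as in the paper to guarantee that $n-a$ and $n-b$ cannot both fall in the length-$a$ interval $[c,c+a-1]$ of $\N$-positions. Your explicit bookkeeping of $o_L$ and $o_R$ separately is a slightly cleaner presentation of the same argument.
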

\begin{proof}
	We show the result by induction on $n$, the position of the game.
	\begin{itemize}
		\item If $n < a$, then none of the player has a move and thus  $o(n)=\P$.
		\item If $ a \leq n < c$, then only  Left  has a valid move and thus $o(n)=\L$.
		\item If $ c \leq n < a+ c$, then  Right  has a winning move to a position $n-c \leq a$ which has outcome $\P$, and  Left  has a winning move to a position with outcome either $\P$ or $\L$
		thus $o(n)=\L$.
		\item Finally, if $n \geq a +c$, then  Right  has no winning move, and  Left  has at least one winning move. Indeed, since $k \geq a$, we can't have at the same time $n-a$ and $n-a-k$ in the interval $[c, a+c[$. So at least one of $n-a$ and $n-a-k$ is not in this interval, and is either a $\P$-position or a $\L$-position by induction.
	\end{itemize}
\end{proof}




\section{When both sets have size 2}


The goal of this section is to investigate the sequence of outcomes for the game $G = (\sl, \sr)$  with $S_L = \{a, b\}$ and $S_R = \{c, d\}$. In particular, if we suppose that $a$ and $b$ are fixed, we would like to characterize for which positions the game $G$ is eventually $\L$. The picture on Figure~\ref{fig:domination} gives an insight of what is happening. On the figure on the left, we have an example with $b \geq 2a$. In this case, the game $G$ is almost always eventually $\L$, except when the point $(c, d)$ is close to the diagonal, i.e., when $|d-c|$ is close to zero. When $(c,d)$ is close to the diagonal, the behavior seems more complicated, and we won't give a characterization here.

When $b < 2a$, the behavior is more complicated, but shares some similarities with the previous case. From the picture on the right in Figure~\ref{fig:domination} we can see that there are some lines such that if the point $(c,d)$ is far enough from these lines, then the game is eventually $\L$. Again, when the point is close to these lines, the behavior is more complex, and we won't try to characterize it here. In all cases, we can see that if $a$ and $b$ are fixed, for almost all of the choices of $c$ and $d$,  Left  dominates.

In the rest of this section, we will assume that we have $d > c > b$. We start by the case $b \geq 2a$ which is easier to analyse.

\begin{figure}
	\centering
	\includegraphics[width=.45\textwidth]{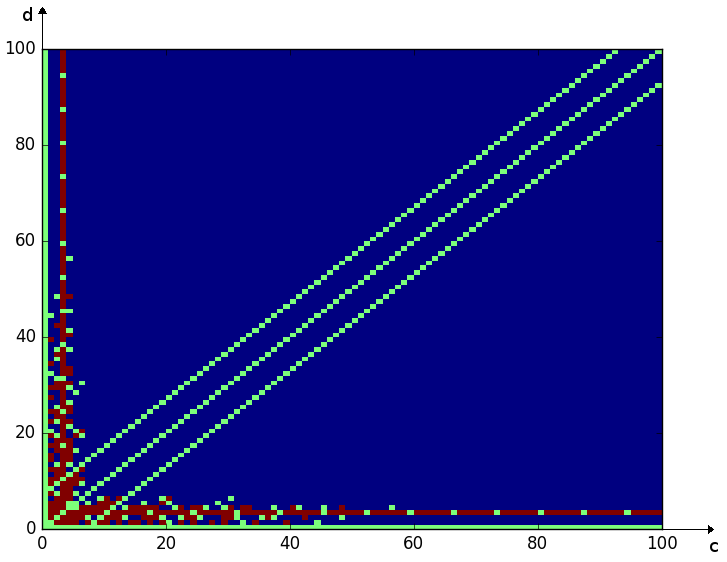}
	\includegraphics[width=.45\textwidth]{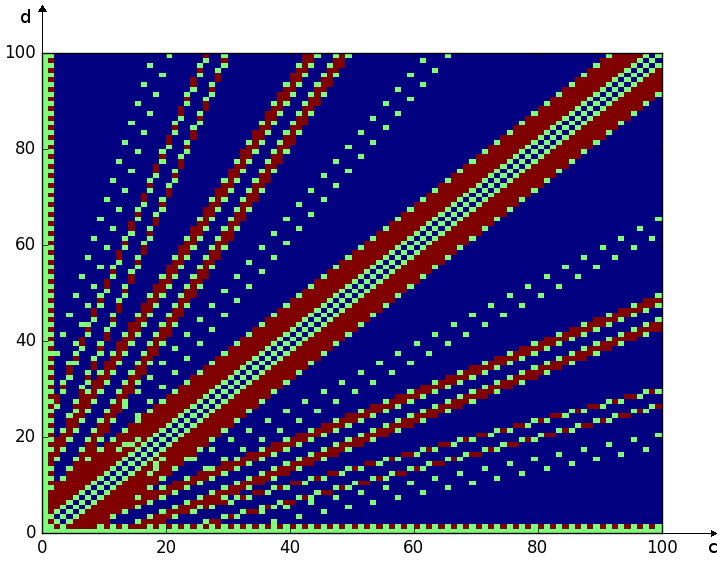}

	{\vspace*{-1.5ex}\tiny $a = 4$ and $k = 7$ \hspace*{15em} $a = 7$ and $k = 2$}
	
	\caption{\label{fig:domination}Properties of the outcome sequences for $G = (\{a, a+k\}, \{c,d\})$. The parameters $a$ and $k$ are fixed, and the pictures are obtained by varying the parameters $c$ and $d$. The point at coordinate $(c,d)$ is blue if the corresponding game is eventually $\L$, red if it is eventually $\R$, and green if there is a mixed period.}
\end{figure}

\subsection{Case $b \geq 2a$}

We start by the case where $b \geq a$, and show that in this case $G$ is ultimately $\L$ if $(c,d)$ is far enough from the diagonal.

\begin{theorem}
	Assume $b \geq 2a$, and $d > c + b$, then $\sl \succ \sr$. More precisely, the outcome sequence is:
	$$ \P^a\L^{c-a}\N^a \L^{d-c -a} \N^a  \L^{\infty}.$$
\end{theorem}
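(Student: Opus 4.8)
The plan is to establish the entire outcome sequence by strong induction on the heap size $n$, exactly in the spirit of the induction carried out in \Cref{thm:abc}. I would use the two standard reductions: Left moving first on $n$ wins ($o_L(n)=L$) iff some move $n-a$ or $n-b$ is a position of outcome $\P$ or $\L$, and Right moving first wins ($o_R(n)=R$) iff some move $n-c$ or $n-d$ is a position of outcome $\P$ or $\R$. The decisive structural feature of the target sequence $\P^a\L^{c-a}\N^a\L^{d-c-a}\N^a\L^\infty$ is that it contains \emph{no} $\R$ and contains $\P$ \emph{only} on the initial block $[0,a)$. Hence, under the induction hypothesis, verifying $o_R(n)=R$ reduces to checking whether Right can move into $[0,a)$, and verifying $o_L(n)=L$ reduces to exhibiting one Left move that avoids the two $\N$-windows $[c,c+a)$ and $[d,d+a)$. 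Throughout I would keep in hand the consequences $c>2a$, $c-a>a$, $d>c+2a$ and $b-a\ge a$ of the hypotheses $d>c>b\ge 2a$ and $d>c+b$.

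First I would dispatch the four short blocks. On $[0,a)$ neither player can move (as $c>b\ge 2a>a$), so both lose going first and the outcome is $\P$. On $[a,c)$ Right is still blocked, while Left's move $n-a$ lands in $[0,c)$, a $\P$- or $\L$-position, giving $\L$. On $[c,c+a)$ Right can only play $c$ (since $d>c+a$ forbids $d$), reaching $[0,a)$ which is $\P$, so $o_R(n)=R$; and Left's move $n-a$ lands in $[c-a,c)\subseteq[a,c)$ (using $c-a>a$), an $\L$-position, so $o_L(n)=L$, giving $\N$. The window $[d,d+a)$ is handled identically: Right plays $d$ into $[0,a)$, while Left plays $a$ into $[d-a,d)\subseteq[c+a,d)$ (using $d-a>c+a$, which follows from $d>c+2a$), giving $\N$ again.

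For the block $[c+a,d)$ I would show $o(n)=\L$. Right's only legal move is $c$, landing in $[a,d-c)$, which contains no $\P$-position (all $\P$ lie in $[0,a)$), so $o_R(n)=L$. For $o_L(n)=L$ I would split: if $n\ge c+2a$ then $n-a\in[c+a,d)$ is $\L$; and if $n\in[c+a,c+2a)$ then the move $b$ gives $n-b\in(a,c)$ (using $b\ge 2a$ to push the value below $c$ and $b<c$ to keep it above $a$), again an $\L$-position. The infinite tail $n\ge d+a$ is then treated by the same reduction, and is where the real work lies.

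The main obstacle is precisely this tail. Showing $o_R(n)=L$ is immediate, since $n-c>a$ and $n-d\ge a$ both avoid $[0,a)$. The crux is to guarantee that at least one of $n-a,n-b$ escapes both $\N$-windows $[c,c+a)$ and $[d,d+a)$ uniformly in $n$. The two landing points differ by $b-a\ge a$, so they cannot both fall inside a single window of length $a$; and the only remaining dangerous configuration, $n-a\in[d,d+a)$ together with $n-b\in[c,c+a)$, would force $d+a\le n<c+a+b$, i.e. $d<c+b$, contradicting the hypothesis $d>c+b$. This is exactly the point where $d>c+b$ is used essentially. Consequently some Left move reaches a non-$\N$ (hence $\P$ or $\L$) position, so $o_L(n)=L$ and $o(n)=\L$, closing the induction. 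No deeper idea is needed once this tail dichotomy is in place; the only care required is to keep the half-open interval arithmetic and the strict inequalities straight.
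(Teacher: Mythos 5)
Your proof is correct and follows essentially the same route as the paper: induction on $n$, with the same decisive dichotomy for the tail $n \geq d+a$ (the two Left landing points differ by $b-a \geq a$, so they cannot lie in the same $\N$-window, and the only cross configuration would force $d < c+b$, contradicting the hypothesis). The only difference is cosmetic: the paper dispatches all positions $n < d$ (and Left's winning first move on $[d,d+a)$) by citing Theorem~\ref{thm:abc} for the game $(\{a,b\},\{c\})$, since $d$ is not yet playable there, whereas you re-derive those initial blocks from scratch.
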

\begin{proof}
	Again, we will show this result by induction on $n$, the starting position of the game. Let $G'$ be the game $(\{a, a+k\}, \{c\})$. If $n < d$, then $G$ played on $n$ has the same outcome as $G'$, since playing $d$ is not a valid move for  Right  in this case. Consequently we can just apply Theorem~\ref{thm:abc}, and get the desired result.
	Otherwise, there are two possible cases:
	\begin{itemize}
		\item If $d \leq n < d + a$, then  Right  has a winning move to the position $n-d < a$, and  Left  has a winning move by playing his strategy for the game $G'$ on $n$. Indeed, this leads to a position $n-x < d$ for some $x \in \{a, a+k\}$ with outcome either $\P$ or $\L$ for $G'$ and consequently also for $G$, since $d$ cannot be played anymore at this point.
		
		\item If $n \geq d +a$, denote by $I_1$ and $I_2$ the two intervals containing the $\N$-position, i.e.,  $I_1 = [c, c+a[$, and $I_2 = [d, d+a[$. Since $k \geq a$, we can't have that $n-a$ and $n-a-k$ are both in $I_1$, or both in $I_2$. Additionally, since $d > c + a + k$, we can't have both $n-a-k \in I_1$ and $n-a \in I_2$ at the same time. Consequently, one of $n-a$ and $n-a-k$ has outcome either $\L$ or $\P$, and  Left  has a winning move on $n$.
	\end{itemize}
\end{proof}

\subsection{General case}

In the general case, we will again prove that if we fix $a$ and $b$, for most choices of $c$ and $d$ the outcome is ultimately $\L$. The exceptional cases are slightly more complicated to characterize. The characterization is related to the following definition:


\begin{definition}
Given an integer $a$, and a real number $\alpha \geq 1$, we denote by $T_{a, \alpha}$ the set of points defined by:
\begin{itemize}
\item $T_{0, \alpha} =  \{ (c,d)\; : \gcd(c, d) \geq \frac {\max(c,d)} \alpha  \}$;
\item for $a \geq 1$, $T_{a, \alpha}$ is obtained from $T_{0, \alpha}$ by a translation of vector $(-a, -a)$.
\end{itemize}
\end{definition}

We can remark that, for any $\alpha$ and $\beta$ with $\beta \geq \alpha$, we have $T_{0, \alpha} \subseteq T_{0, \beta}$.  We now prove some properties of the sets $T_{a, \alpha}$ which will be usefull for the proofs later on.

\begin{lemma}
\label{lem+Tacharact}
Assume that there are some positive integers $x, y, u$ and $v$ such that $xu - yv = 0$ with $(u,v) \neq (0,0)$, then $(x, y) \in T_{0, \max(u,v)}$.
\end{lemma}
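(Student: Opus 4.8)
The plan is to unwind the definition of $T_{0,\alpha}$ with $\alpha = \max(u,v)$ and verify directly that $\gcd(x,y) \ge \max(x,y)/\max(u,v)$. The natural first move is to factor out the gcd: write $g = \gcd(x,y)$, $x = g x'$ and $y = g y'$ with $\gcd(x',y')=1$. Substituting into the hypothesis $xu = yv$ and cancelling $g$ gives the reduced relation $x' u = y' v$. Since $\max(x,y) = g\max(x',y')$ and $\gcd(x,y)=g$, the inequality we want is equivalent, after cancelling $g$, to the cleaner statement $\max(u,v) \ge \max(x',y')$.

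To prove this reduced inequality I would exploit coprimality. From $x' u = y' v$ we get $y' \mid x' u$, and since $\gcd(x',y')=1$ this forces $y' \mid u$; symmetrically $x' \mid v$. Because all the quantities are positive integers (the assumption that $x,y,u,v$ are positive, together with the equation, rules out any of $u,v$ vanishing), a nonzero multiple is at least as large as its divisor, so $u \ge y'$ and $v \ge x'$. Taking maxima yields $\max(u,v) \ge \max(x',y')$, which is exactly what remained.

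Reassembling, $\max(x,y)/\max(u,v) = g\max(x',y')/\max(u,v) \le g = \gcd(x,y)$, so $(x,y)\in T_{0,\max(u,v)}$ as required. I do not expect a serious obstacle: the argument is elementary number theory, and the only points needing a little care are the bookkeeping that converts the fractional condition in the definition of $T_{0,\alpha}$ into the multiple-free form $\max(u,v)\ge\max(x',y')$, and the observation that positivity is precisely what upgrades the divisibilities $y'\mid u$ and $x'\mid v$ into the size comparisons $u\ge y'$ and $v\ge x'$.
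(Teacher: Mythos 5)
Your proof is correct, and it is the mirror image of the paper's argument rather than a copy of it. The paper normalizes the other pair: it divides $u$ and $v$ by $\gcd(u,v)$ to make them coprime, applies Euclid's lemma to get $u \mid y$ (the paper's text misprints this as ``$u$ is a divisor of $v$''), writes $y = gu$, $x = gv$, and obtains the exact identity $\max(x,y)/\gcd(x,y) = \max(u,v)$ for the reduced pair; membership for the original pair then rests on the monotonicity $T_{0,\alpha} \subseteq T_{0,\beta}$ for $\alpha \le \beta$ noted just before the lemma. You instead factor $g = \gcd(x,y)$ out of $(x,y)$, apply Euclid's lemma to the coprime pair $(x',y')$ to get $y' \mid u$ and $x' \mid v$, and use positivity to convert these divisibilities into $\max(u,v) \ge \max(x',y')$. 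Your route buys a self-contained argument: no preliminary normalization of $(u,v)$ and no appeal to the monotonicity of $T_{0,\alpha}$ in $\alpha$, and it isolates exactly where positivity is needed. The paper's route buys slightly more information in the coprime case, namely the equality $\max(x,y)/\gcd(x,y) = \max(u,v)$, i.e., the point $(x,y)$ lies exactly on the line $xu = yv$; this dovetails with \Cref{lem+Talines}, where $T_{0,\alpha}$ is identified with a finite union of such lines.
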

\begin{proof}
Up to dividing $u$ and $v$ by $\gcd(u,v)$, we can assume that $u$ and $v$ are coprime. Then, the equation is $xu = yv$. Consequently, $u$ is a divisor of $yv$, and since $u$ and $v$ are coprimes, this means that $u$ is a divisor of $v$. We can write $y = gu$, and consequently we have $xu = yv = vgu$. This means that $x = vg$, and $g = \gcd(x,y)$. Consequently, $\frac {\max(x, y)} {\gcd(x,y)} = \max(u,v)$, and $(x,y) \in T_{0, \max(u,v)}$.
\end{proof}

Given two points $p = (x,y)$ and $p' = (x',y')$, we denote by $\dist(p,p')$ the distance between these two points according to the $1$-norm: $\dist(p, p') = |x -  x'| + |y - y'|$. If $\mathcal D$ is a subset of $\mathbb N^2$, we denote by $\dist(p,\mathcal D) = \min\{\dist(p, p''), p'' \in \mathcal D \}$ the distance of the point $p$ to the set $\mathcal D$.

\begin{lemma}
\label{lem+Tadist}
Assume that there are some positive integers $x, y, u, v$ and $a$ such that $|xu - yv| \leq a$, then $\dist((x, y), T_{0, \max(u,v)}) \leq a(u+v)$.
\end{lemma}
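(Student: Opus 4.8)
My plan is to reduce the approximate hypothesis $|xu-yv|\le a$ to the exact equation handled by \cref{lem+Tacharact}, by perturbing $(x,y)$ onto the line $L=\{(X,Y):Xu=Yv\}$. By \cref{lem+Tacharact} (applied with witness $(u,v)$, which is nonzero since $u,v>0$), every point $(x',y')$ with positive integer coordinates satisfying $x'u=y'v$ lies in $T_{0,\max(u,v)}$. Hence it suffices to produce such a point with $\dist((x,y),(x',y'))\le a(u+v)$; this single point then certifies $\dist((x,y),T_{0,\max(u,v)})\le a(u+v)$. So the whole content of the lemma is a lattice-geometry statement: a point lying within additive error $a$ of $L$ (as measured by $|xu-yv|$) admits a nearby positive lattice point \emph{on} $L$.

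To make the geometry explicit I would set $g=\gcd(u,v)$ and write $u=gu_0$, $v=gv_0$ with $\gcd(u_0,v_0)=1$. Then the integer points of $L$ are exactly the multiples $(v_0t,u_0t)$ for $t\in\mathbb Z$, and two consecutive ones are at $1$-norm distance $u_0+v_0$. Moreover $xu-yv=g(xu_0-yv_0)$, so the integer $\delta_0:=xu_0-yv_0$ satisfies $g|\delta_0|=|xu-yv|\le a$, i.e.\ $|\delta_0|\le a/g$; this $\delta_0$ is the scaled signed distance of $(x,y)$ from $L$.

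I would then take for $(x',y')$ the lattice point $(v_0t,u_0t)$ where $t$ is the nearest integer to $x/v_0$, clamped to be at least $1$ so that both coordinates are positive. Using the identity $y/u_0=x/v_0-\delta_0/(u_0v_0)$, the two coordinate errors obey $|x-v_0t|\le v_0/2$ and $|y-u_0t|\le u_0/2+|\delta_0|/v_0$; summing and using $|\delta_0|\le a/g$ together with $g\ge 1$, $a\ge 1$ and $u+v\ge 2$ yields $\dist((x,y),(x',y'))\le a(u+v)$. The boundary case where the unclamped nearest integer is non-positive (which forces $x<v_0/2$, so that $(x,y)$ sits near the first positive lattice point $(v_0,u_0)$) is checked directly and satisfies the same bound. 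Feeding $(x',y')$ into \cref{lem+Tacharact} then closes the argument.

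The main obstacle is not the algebra but the lattice bookkeeping: guaranteeing that the perturbed point has \emph{positive} coordinates (hence the clamping $t\ge 1$) while still respecting the distance budget. The stated bound $a(u+v)$ is deliberately generous compared with the genuine distance, which is only of order $(u_0+v_0)/2+a/v$; this slack is exactly what lets the two crude coordinate estimates, and the awkward near-axis case, go through without any delicate optimization. The one place I would verify carefully is the clamped case, since that is where the roughest estimate is invoked and where the inequality $(u+v)/2+a\le a(u+v)$ must be confirmed for $a\ge 1$ and $u+v\ge 2$.
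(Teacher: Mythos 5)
Your proposal is correct and uses the same key reduction as the paper --- produce a lattice point lying exactly on the line $Xu=Yv$ within $1$-norm distance $a(u+v)$ of $(x,y)$, then invoke \Cref{lem+Tacharact} --- but the construction of that witness point is genuinely different. The paper's proof is purely algebraic and uniform: writing $r=xu-yv=qg$ with $g=\gcd(u,v)$ and taking B\'ezout coefficients with $uu'+vv'=g$, it corrects $(x,y)$ to $(x-qu',\,y+qv')$, which kills the residue in one step and gives $\dist\bigl((x,y),(x',y')\bigr)=|q|\bigl(|u'|+|v'|\bigr)\le a(u+v)$ with no case analysis. You instead parametrize the integer points of the line as $(v_0t,u_0t)$ (with $u=gu_0$, $v=gv_0$, $\gcd(u_0,v_0)=1$) and round $x/v_0$ to the nearest admissible $t$; this costs you the clamping case near the axis, and there one must be slightly more careful than your closing sentence suggests: the inequality $(u+v)/2+a\le a(u+v)$ is the one governing the \emph{unclamped} case, while in the clamped case the distance can approach $(u_0+v_0-1)+a/(gv_0)$ (e.g.\ $x=1$, $v_0$ large), so the crude bound $|x-v_0|\le v_0$ fails when $a=g=1$ and you need $|x-v_0|\le v_0-1$ (from $x\ge 1$), after which the required inequality $(u_0+v_0-1)+a\le ag(u_0+v_0)$ reduces to $(a-1)(u_0+v_0-1)\ge 0$ and holds. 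So the case you flagged does close, just under a finer estimate. What your route buys in exchange is worth noting: your clamping guarantees the witness has \emph{positive} coordinates, which is formally a hypothesis of \Cref{lem+Tacharact}, whereas the paper's corrected point $(x-qu',\,y+qv')$ could a priori have a non-positive coordinate and the paper's proof is silent on this; on that point your argument is the more scrupulous of the two.
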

\begin{proof}
Let $r = xu - yv$, with $|r| \leq a$, and $g = \gcd(u,v)$. By definition, $r$ is a multiple of $g$, and we can write $r = qg$ for some integer $q$. Additionally, by Bézout's identity, we know that there exists two integers $u'$ and $v'$ such that $uu' + vv' = g$, and $|u'| \leq u$ and $|v'| \leq v$. Consider the point $(x', y')$, with $x' = x - qu'$, and $y' = y + qv'$. We have the following:
\begin{align*}
    x'u - y'v = xu + yv - q(uu' + vv') = r - q g = 0
\end{align*}
By~\Cref{lem+Tacharact}, we know that $(x', y') \in T_{0, \max(u,v)}$. Additionally, $\dist((x,y), (x', y')) = |qu'| + |qv'| \leq |r| (u+v) \leq a(u+v)$. This proves the Lemma.
\end{proof}

For any $a$ and $\alpha$, the set $T_{a, \alpha}$ satisfies the following properties:
\begin{lemma}
\label{lem+Talines}
For any $a$ and $\alpha$, the set $T_{a, \alpha}$ is the union of a finite set of lines.
\end{lemma}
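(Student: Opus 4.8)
The plan is to first reduce to the case $a = 0$. Since a translation by the vector $(-a,-a)$ carries every line to a line, it sends a finite family of lines to a finite family of lines; hence it suffices to prove that $T_{0,\alpha}$ is a finite union of lines, and the statement for general $a$ follows by translating back.

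Next I would give a direct description of membership in $T_{0,\alpha}$ in terms of \emph{primitive directions}. Given a point $(c,d)$, set $g = \gcd(c,d)$ and write $(c,d) = g\,(c_0,d_0)$ with $\gcd(c_0,d_0) = 1$. Then $\max(c,d)/\gcd(c,d) = \max(c_0,d_0)$, so that $(c,d) \in T_{0,\alpha}$ if and only if its primitive direction satisfies $\max(c_0,d_0) \leq \alpha$. This is essentially the computation already carried out in the proof of Lemma~\ref{lem+Tacharact}, and I would simply reuse it.

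The key observation is then finiteness: the set $D_\alpha = \{(c_0,d_0) \in \mathbb{N}^2 : \gcd(c_0,d_0) = 1,\ \max(c_0,d_0) \leq \alpha\}$ has at most $(\lfloor\alpha\rfloor + 1)^2$ elements, and it is nonempty because $\alpha \geq 1$ forces $(1,0),(0,1),(1,1) \in D_\alpha$. For each $(c_0,d_0) \in D_\alpha$ let $\ell_{(c_0,d_0)}$ denote the line $\{\,t(c_0,d_0) : t \in \mathbb{R}\,\}$ through the origin. By the description above, every point of $T_{0,\alpha}$ lies on some $\ell_{(c_0,d_0)}$ with $(c_0,d_0) \in D_\alpha$ (take $t = g$); conversely, for a positive integer $t$ the point $t(c_0,d_0)$ has gcd $t$ and primitive direction $(c_0,d_0)$, so it lies in $T_{0,\alpha}$. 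Hence $T_{0,\alpha}$ is exactly the set of integer points of the finite union $\bigcup_{(c_0,d_0)\in D_\alpha}\ell_{(c_0,d_0)}$, i.e.\ a finite union of lines, which is what we want.

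The argument is essentially bookkeeping once the reduced-direction characterization is in place, so I do not anticipate a serious obstacle. The only points that need a word of care are the role of the hypothesis $\alpha \geq 1$ (which keeps $D_\alpha$ nonempty and ensures the two coordinate axes, corresponding to $(1,0)$ and $(0,1)$, are among the lines) and the convention for $\gcd$ when one coordinate vanishes; both are handled by including the primitive pairs with a zero entry in $D_\alpha$.
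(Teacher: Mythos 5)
Your proposal is correct and takes essentially the same route as the paper: both reduce to $a=0$ by translation and identify $T_{0,\alpha}$ with the integer points of finitely many lines through the origin having bounded primitive direction, with the forward inclusion handled by the computation of Lemma~\ref{lem+Tacharact}. The paper's only cosmetic difference is that it parametrizes the lines by equations $xu - yv = 0$ with $u,v \leq \alpha$ rather than by primitive direction vectors, which yields the same finite family.
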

\begin{proof}
Since $T_{a,\alpha}$ can be obtained from $T_{0,\alpha}$ by a translation, we only need to prove the result in the case $a = 0$. Let $\mathcal D$ be the union of the lines with equation $xu - yv = 0$, for all $u,v \leq \alpha$. The set $\mathcal D$ is the union of a finite number of lines. By~\Cref{lem+Tacharact}, we know that $\mathcal D \subseteq T_{0, \alpha}$. Reciprocally, let $(x, y)$ be a point in $T_{0, \alpha}$, and let $g = \gcd(x, y)$. We can write $x = x'g$, and $y = y'g$ for some integers $x'$ and $y'$. We have the following:
\begin{align*}
    xy' - yx' = x'y'g - y'gx = 0
\end{align*}
Additionally, we have $x' = \frac x g \leq x {\frac {\max(x,y)} \alpha} \leq \alpha$, and similarly for $y'$. Consequently, $(x,y) \in \mathcal D$, and $T_{0, \alpha} = \mathcal D$.
\end{proof}

The goal in the remaining of this section is to prove the following theorem:
\begin{theorem}
\label{th+twoVstwo}
Let $a, b, c$ and $d$ be positive integers, let $A = \lceil \frac a {b-a} \rceil +1$. Assume that $\dist((c, d), T_{a, A}) \geq 2A (a +2b)$, then the partizan subtraction game with $S_L = \{a, b\}$, and $S_R = \{ c,d\}$ is ultimately $\L$.
\end{theorem}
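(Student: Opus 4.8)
The plan is to reduce the two-player analysis to a one-sided \emph{reactive pursuit} and then to show that Left can always dodge her losing configurations precisely when $(c,d)$ avoids the resonance lines $T_{a,A}$. First I would observe that it suffices to give Left a winning strategy in which she always responds to Right. Since turns alternate, after an optional free first move by Left every subsequent round consists of a Right move followed by a Left move; so whether Left moves first or second, the play splits into rounds (Right plays $c$ or $d$, then Left plays $a$ or $b$), and it is enough to analyse this reactive game. Writing $k = b-a$, Left loses only if the heap drops into $[0,a)$ at one of her turns, and she wins as soon as the heap lies in $[0,c)$ at a Right turn. The positions that are immediately dangerous for Left are those where, at a Right turn, the heap lies in the two \emph{danger windows} $[c,c+a) \cup [d,d+a)$, from which Right can play $c$ or $d$ and leave Left stuck below $a$. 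Left's task is therefore to steer the heap monotonically down to $[0,c)$ while keeping it out of these width-$a$ windows at every Right turn.

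Next I would quantify Left's steering power. If, over a block of rounds, Right plays $c$ exactly $u$ times and $d$ exactly $v$ times, then playing $a$ throughout removes $u(a+c)+v(a+d)$ tokens, and each time Left substitutes $b$ for $a$ she removes an extra $k$; hence the totals Left can realise over the block form an arithmetic progression with common difference $k$ and length $(u+v)k$. Since a danger window has width $a$, covering it requires $(u+v)k \gtrsim a$, that is, about $A = \lceil a/(b-a)\rceil + 1$ rounds --- this is exactly the role of the parameter $A$. The remaining difficulty is that Right chooses $u,v$ adaptively and may try to re-enter a danger window at an intermediate Right turn before Left has completed her $k$-adjustment.

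The crux is to rule this out using the hypothesis. Since translation by $(-a,-a)$ preserves the $1$-norm, $\dist((c,d),T_{a,A}) = \dist((c+a,d+a),T_{0,A}) \geq 2A(a+2b)$. Applying the contrapositive of \Cref{lem+Tadist} with $x=c+a$, $y=d+a$ and any pair $1 \le u,v \le A$ (so $u+v \le 2A$), I obtain the \emph{resonance-free} bound
$$ |u(c+a) - v(d+a)| \;\geq\; a+2b \qquad \text{for all } 1 \le u,v \le A; $$
indeed, were $|u(c+a)-v(d+a)| < a+2b$, the lemma would give $\dist((c+a,d+a),T_{0,A}) < (a+2b)\cdot 2A = 2A(a+2b)$, contradicting the hypothesis. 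The positions of the danger windows at successive Right turns are governed by the residues of $c+a$ and $d+a$, and this bound says that no block of at most $A$ rounds of type-$c$ can be realigned with a block of at most $A$ rounds of type-$d$ to within the window scale $a+2b$. Consequently Right can never herd the heap back into a danger window once Left has used her $k$-shifts to exit it, so within $A$ rounds Left can always realise a safe heap value at the next Right turn. Iterating, Left drives the heap strictly downward into $[0,c)$ and wins; since this works for every sufficiently large starting heap and for both players moving first, the game is ultimately $\L$.

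Finally, the main obstacle I anticipate is the rigorous bookkeeping of the reactive process: defining precisely the set of heap values Left can guarantee at each Right turn, proving by induction that this set meets $[0,c)$ before ever being forced into a danger window, and verifying that the specific constants --- the coefficient range $u,v \le A$ and the threshold $2A(a+2b)$ --- are exactly what is needed to absorb both the window widths (of order $a$) and the step sizes of the players' moves (of order $b$). Handling the boundary behaviour near the absorbing region $[0,c)$, so that Left both avoids every trap and actually lands below $c$ on one of her own moves, is where the argument must be most careful.
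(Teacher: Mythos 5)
Your overall picture is the right one, and your number-theoretic step is sound: deriving the resonance-free bound $|u(c+a)-v(d+a)|\geq a+2b$ for $1\leq u,v\leq A$ from the hypothesis via the contrapositive of \Cref{lem+Tadist} (using translation invariance of $\dist$ and the monotonicity $T_{0,\max(u,v)}\subseteq T_{0,A}$) is essentially the paper's Claim~\ref{claim+distance}, up to the paper anchoring its rounds at $c+b,d+b$ rather than $c+a,d+a$. Likewise, your identification of $A$ as the number of rounds Left needs to sweep a width-$a$ window by increments of $k=b-a$, and of the depth-one danger windows $[c,c+a)\cup[d,d+a)$, matches the paper exactly: those windows are the intervals $I^\N_{0,1}$ and $I^\N_{1,0}$ in its proof.

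The genuine gap is the sentence ``Consequently Right can never herd the heap back into a danger window once Left has used her $k$-shifts to exit it.'' Nothing before it proves this, and it is precisely the hard part of the theorem. What must be shown is that the \emph{whole family} of iterated danger windows---in the paper, $I^\N_{i,j}$ anchored at $i(d+b)+j(c+b)-b$ with width $a-(i+j-1)(b-a)$, shrinking with depth and empty once $i+j\geq A$---together with companion intervals $I^\P_{i,j}$, satisfies the separation properties of \Cref{lem+Iprop}: pairwise disjointness, and the fact that no interval meets the $b$ positions immediately preceding any $I^\N_{i,j}$. The latter is what makes the dodge work: if Left's move to $n-a$ lands in some $I^\N_{i,j}$, then $n-b$ lies among the $b$ positions preceding that window and is therefore safe. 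Establishing these properties requires separation estimates between \emph{all} pairs of anchors $\alpha_{i,j},\beta_{i',j'}$, including the $\pm b$ offsets, not only the single pure-block inequality you state; and even granted them, one still needs the induction on heap size that classifies every position as $\P$, $\N$ or $\L$ (which in particular handles Left moving first, a case your reactive reduction treats only informally, and yields finiteness of $I^\P\cup I^\N$, hence ``ultimately $\L$''). You explicitly defer all of this as ``bookkeeping,'' but it constitutes the body of the paper's argument---\Cref{lem+Iprop} plus the entire position induction---so what you have written is a correct plan with its central step asserted rather than proved.
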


Given two integers $i$ and $j$, we define the following intervals:
\begin{itemize}
	\item $I^\P _{i,j} = [\alpha_{i,j}, \alpha_{i,j} + a - (i+j) (b-a)[$
	\item $I^\N _{i,j} = [\beta_{i,j}, \beta_{i,j} + a - (i+j-1) (b-a)[$
\end{itemize}
where
\begin{itemize}
	\item $\alpha_{i,j} = i (d + b) + j (c + b)$,
	\item  and $\beta_{i,j} = \alpha_{i,j} - b$.
\end{itemize}
 Denote by $I^\P$ the set $\cup_{i,j} I_{i,j} ^\P$, and similarly, $I^\N = \cup_{i,j} I_{i,j} ^\N$. Note that $I^\P _{i,j}$ is empty if $i+j \geq \lceil\frac a k\rceil$, and $I^\N _{i,j}$ is empty if $i+j \geq \lceil\frac a k\rceil + 1$. Our goal is to show that, under the conditions in the statement of the theorem, the set $I^\N$ is the set of $\N$-positions, $I^\P$ the set of $\P$-positions, and all the other positions have outcome $\L$. In particular, since both $I^\P$ and $I^\N$ are finite, this will imply that the outcome sequence is eventually $\L$. Before showing this, we prove that under the conditions of the theorem the intervals $I^\P_{i,j}$ and $I^\N_{i,j}$ satisfy the following properties.

\begin{lemma}
	\label{lem+Iprop}
	Fix the parameters $a$ and $b$, and let $A = \lceil \frac a {b-a}\rceil +1$. Assume that $c$ and $d$ are such that $\dist((c,d), T_{b, A}) \geq 2 A (a +2b)$, then the intervals $I_{i,j}^\N$ and $I^\P_{i,j}$ satisfy the following properties:
	\begin{enumerate}[(i)]
		\item \label{I+disj} they are pairwise disjoint,
		\item \label{I+prec} there is no interval $I^\P_{i', j'}$ or $I^\N_{i', j'}$ intersecting any of the $b$ positions preceding $I_{i,j}^\N$,
		\item \label{I+plusc} $I_{i,j}^\P + c = I_{i, j+1} ^ \N$,
		\item \label{I+plusd} $I_{i,j}^\P + d = I_{i+1, j} ^ \N$,
		\item \label{I+int} $(I_{i,j}^\N + a) \cap (I_{i,j}^\N + b) = I_{i,j}^\P$.
	\end{enumerate}
\end{lemma}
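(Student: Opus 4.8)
The plan is to separate the five statements into two groups. Properties~\eqref{I+plusc}, \eqref{I+plusd} and~\eqref{I+int} are pure identities between the interval endpoints and require no hypothesis on $c,d$; I would dispatch them first by direct computation. Writing $k=b-a$, one checks $\beta_{i,j+1}=\alpha_{i,j+1}-b=\alpha_{i,j}+(c+b)-b=\alpha_{i,j}+c$, while the length of $I^\N_{i,j+1}$ is $a-(i+j)k$, which is exactly the length of $I^\P_{i,j}$; this gives~\eqref{I+plusc}, and~\eqref{I+plusd} follows symmetrically from $\beta_{i+1,j}=\alpha_{i,j}+d$. For~\eqref{I+int}, since $b>a$ the sets $I^\N_{i,j}+a$ and $I^\N_{i,j}+b$ overlap in $[\beta_{i,j}+b,\,\beta_{i,j}+a+(a-(i+j-1)k))$; using $\beta_{i,j}+b=\alpha_{i,j}$ and $\beta_{i,j}+2a-(i+j-1)k=\alpha_{i,j}+a-(i+j)k$, this is precisely $I^\P_{i,j}$.

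For the disjointness and spacing statements~\eqref{I+disj} and~\eqref{I+prec} I would reduce everything to a single separation estimate on the offsets $\alpha_{i,j}=i(d+b)+j(c+b)$. Note that $I^\N_{i,j}$ ends at $\alpha_{i,j}-(i+j)k\le\alpha_{i,j}$, so the \emph{same}-index interactions ($I^\N_{i,j}$ versus $I^\P_{i,j}$, and the $b$ positions preceding $I^\N_{i,j}$) are disjoint by the direct computation above. For distinct indices, collect into one window $W_{i,j}=[\alpha_{i,j}-2b,\ \alpha_{i,j}+a)$ all of $I^\N_{i,j}$, $I^\P_{i,j}$ and the $b$ positions preceding $I^\N_{i,j}$; each such window has width $a+2b$. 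Thus both~\eqref{I+disj} and the cross-index part of~\eqref{I+prec} follow once I show that for every pair of distinct admissible indices the offsets are well separated, namely $|\alpha_{i,j}-\alpha_{i',j'}|\ge a+2b$. Since $I^\P_{i,j}$ (resp. $I^\N_{i,j}$) is nonempty only for $i+j\le\lceil a/k\rceil=A-1$ with $i,j\ge0$, the index differences $p=i-i'$ and $q=j-j'$ satisfy $|p|,|q|\le A-1$, and $\alpha_{i,j}-\alpha_{i',j'}=p(d+b)+q(c+b)$.

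The crux is therefore to lower-bound $|p(d+b)+q(c+b)|$ from the hypothesis $\dist((c,d),T_{b,A})\ge 2A(a+2b)$. Setting $x=c+b$, $y=d+b$, translation invariance gives $\dist((x,y),T_{0,A})=\dist((c,d),T_{b,A})$, and the quantity to bound is $qx+py$. I would split on the signs of $p,q$. When $p$ and $q$ share a sign or one of them vanishes, $|qx+py|\ge\min(x,y)\ge b$ and in fact exceeds $a+2b$ using $c,d>b$ together with the size forced on $\max(x,y)$; these cases are elementary. The genuine work is the opposite-sign case, say $q>0>p$, where $qx+py=qx-|p|y$ has exactly the shape $xu-yv$ of \Cref{lem+Tadist} with $u=q$, $v=|p|$, $\max(u,v)\le A-1$ and $u+v\le 2(A-1)<2A$. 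Invoking the contrapositive of \Cref{lem+Tadist}, together with $T_{0,\max(u,v)}\subseteq T_{0,A}$ (so that the distance to the smaller set dominates the hypothesis), the bound $\dist((x,y),T_{0,A})\ge 2A(a+2b)>(a+2b)(u+v)$ yields $|qx-|p|y|>a+2b$, as required.

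I expect the main obstacle to be exactly this last conversion: matching the geometric distance hypothesis to the arithmetic separation uniformly over all admissible index pairs, and in particular making the constant $a+2b$ come out simultaneously from the $T$-based opposite-sign estimate and from the elementary same-sign and degenerate ($p=0$ or $q=0$) estimates. Care is needed to confirm that far-from-$T$ points cannot make any single offset step $|c+b|$ or $|d+b|$ too small relative to $a+2b$, and to verify that the finitely many small-index (near-origin) windows, which lie in the bounded preperiodic range, do not disrupt the spacing used downstream; once the separation estimate is secured, properties~\eqref{I+disj} and~\eqref{I+prec} follow immediately from the window argument.
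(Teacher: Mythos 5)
Your overall route is essentially the paper's: properties~(iii)--(v) are dispatched as endpoint identities (the paper dismisses them as ``consequences of the definitions''; your computations $\beta_{i,j+1}=\alpha_{i,j}+c$ and the overlap calculation for~(v) are correct), and properties~(i)--(ii) are reduced to a separation bound on the offsets $\alpha_{i,j}$, converted from the distance hypothesis via \Cref{lem+Tadist}. The paper does this as a contradiction (an intersection forces $|\alpha_{i,j}-\alpha_{i',j'}|\le a$, $|\beta_{i,j}-\beta_{i',j'}|\le a$ or $|\alpha_{i,j}-\beta_{i',j'}|\le a+b$, hence $\dist((c,d),T_{b,A})\le 2A(a+2b)$), whereas you package all same-index sets into windows $W_{i,j}=[\alpha_{i,j}-2b,\alpha_{i,j}+a)$ of width $a+2b$ and prove a single uniform estimate $|\alpha_{i,j}-\alpha_{i',j'}|\ge a+2b$; this is a cleaner organization of the same argument, and your opposite-sign case is sound: $u+v\le 2(A-1)<2A$ and $T_{0,\max(u,v)}\subseteq T_{0,A}$ do give $\bigl|qx-|p|y\bigr|>a+2b$ from the contrapositive of \Cref{lem+Tadist}.

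The one genuine gap is the degenerate case $p=0$ or $q=0$, which you flag at the end but never close, and whose stated justification is wrong. When $p=0$, $|q|=1$ you need $c+b\ge a+2b$ (and symmetrically $d+b\ge a+2b$ when $q=0$), i.e.\ a lower bound on $\min(c+b,d+b)$. Neither ingredient you invoke delivers this: ``$c,d>b$'' only gives $c+b>2b$, which is short by $a$, and ``the size forced on $\max(x,y)$'' is the wrong quantity --- when $p=0$ the relevant coordinate is $c+b$, which may well be the \emph{minimum}, and no bound on the maximum controls it. The fix lives inside your own framework: the coordinate axes belong to $T_{0,A}$, since $\gcd(t,0)=t=\max(t,0)$, so $(0,d+b)$ and $(c+b,0)$ are points of $T_{0,A}$ and hence $\dist\bigl((c+b,d+b),T_{0,A}\bigr)\le\min(c+b,d+b)$. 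The hypothesis therefore forces $\min(c+b,d+b)\ge 2A(a+2b)>a+2b$, which settles the degenerate case (equivalently, apply \Cref{lem+Tadist} with one of $u,v$ equal to zero, which is what the paper's Claim implicitly does). With that observation added, your proof is complete.
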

\begin{proof}
    The points $(\ref{I+plusc})$, $(\ref{I+plusd})$ and $(\ref{I+int})$ are just consequences of the definitions of $I^\P_{i,j}$ and $I^\N_{i,j}$. Consequently, we only need to prove the two other points.

    We know that $I^\N_{i,j}$ and $I^\P_{i,j}$ are empty when $i+j \geq \lceil \frac a {b-a}\rceil +1 = A$, consequently, we will assume in all the following that the indices $i,j,i'$ and $j'$ are all upper bounded by $A$.
    We first show the following claim. The rest of the proof will simply consists in applying this claim several times.

    \begin{claim}
        \label{claim+distance}
        Assume that there is an integers $B$, and indices $i,j, i', j'\leq A$, such that one of the following holds:
        \begin{itemize}
            \item $|\alpha_{i,j} - \alpha_{i', j'}| \leq B$
            \item $|\beta_{i,j} - \beta_{i', j'}| \leq B$
            \item $|\alpha_{i,j} - \beta_{i', j'}| \leq B$
        \end{itemize}
        Then in all three cases we have $\dist((c,d), T_{b, A}) \leq 2 A (B+b)$.
    \end{claim}
    \begin{proof}
        The first two cases are equivalent to the inequality $|(i - i')(d+b) + (j - j')(c+b)| \leq B$, and the result follows by applying~\Cref{lem+Tadist}. The third case is equivalent to $|(i - i')(d+b) + (j - j')(c+b) + b| \leq B$. Using the triangle inequality, this implies $|(i - i')(d+b) + (j - j')(c+b)| \leq B + b$, and the result follows from~\Cref{lem+Tadist}.
    \end{proof}

    We will prove the points~$(\ref{I+disj})$ and~$(\ref{I+prec})$ by proving their contrapositives. In other words, assuming that one of these two conditions does not hold, we want to show that $\dist((c,d), T_{b, \alpha}) \leq 2\alpha(a+b)$.

    We first consider the point $(\ref{I+disj})$. First, assume that there are two intervals $I^\P_{i,j}$ and $I^\P_{i', j'}$ such that the two intervals intersect. Then, the  Left  endpoint of one of these two intervals is contained in the other interval. Without loss of generality, we can assume that $\alpha_{i,j} \in I^\P_{i', j'}$. This implies:
	\begin{align*}
	    \alpha_{i',j'} \leq \alpha_{i,j} \leq \alpha_{i',j'} + a - (b-a)(i'+j') \\
	    0 \leq \alpha_{i,j} - \alpha_{i', j'} \leq a - (b-a) (i' + j') \leq a \\
	\end{align*}
	By~\cref{claim+distance}, this implies $\dist((c,d), T_{b,A}) \leq 2A(a+b)$.

    Similarly, if we assume that $I^\N_{i,j}$ and $I^\N_{i', j'}$ intersect, then this implies without loss of generality that $\beta_{i,j} \in I^\N_{i', j'}$, and consequently, $0 \leq \beta_{i,j} - \beta_{i', j'} \leq a - (i+j-1)k \leq a$. Again, using~\Cref{claim+distance}, this implies $\dist((c,d), T_{b, A}) \leq 2(a+b)A$.

    Finally, if $I^\N_{i',j'}$ and $I^\P_{i,j}$ intersect, then either $0 \leq \alpha_{i,j} - \beta_{i', j'} \leq a$ if $\alpha_{i,j} \in I^\N_{i',j'}$ or $0 \leq \beta_{i', j'} - \alpha_{i,j} \leq a$ if $\beta_{i', j'} \in I^\P_{i,j}$. In both cases, the~\cref{claim+distance} gives the desired result.

    The proof for the point~$(\ref{I+prec})$ is essentially the same as above. If $I^\N_{i',j'}$ intersects one of the $b$ positions preceding $I^\N_{i,j}$, then we have the two inequalities:
    \begin{align*}
        \beta_{i', j'} + a-(i' + j' -1) k &\geq \beta_{i,j} - b & \beta_{i', j'} \leq \beta_{i,j}
    \end{align*}
    From these inequalities we can immediately deduce $ -a-b \leq \beta_{i', j'} - \beta_{i,j} \leq 0$. The inequality $\dist((c,d), T_{a+k, A}) \leq 2A(3a+2k)$ follows immediately from~\Cref{claim+distance}. Similarly, if the interval $I^\P_{i',j'}$ intersects one of the $b$ positions preceding $I^\N_{i,j}$, then we have the two inequalities:

    \begin{align*}
        \alpha_{i', j'} + a-(i' + j') (b-a) &\geq \beta_{i,j} - b & \alpha_{i', j'} \leq \beta_{i,j}
    \end{align*}
    This implies $ -(a+b) \leq \alpha_{i', j'} - \beta_{i,j} \leq 0$, and again the result holds by~\cref{claim+distance}.
\end{proof}

We now have all the tools needed to prove the theorem.

\begin{proof}{of~\Cref{th+twoVstwo}}

    Let $a,b,c,d$ be integers, and let $A = \lceil \frac a {b-a}\rceil$, and assume that $\dist((c,d), T_{b, A}) \geq 2A(a+2b)$. We know that the four properties of~\Cref{lem+Iprop} hold. We will show by induction on $n$ that for any position $n \geq 0$, if $n \in I^\P$, then $n$ is a $\P$-position, if $n \in I^\N$, then it is a $\N$-position, and otherwise it is an $\L$-position. The inductive case is treated in the same way as the base case.

    First, assume that $n \in I_{i,j}^\N$ for some indices $i$ and $j$ such that $i+j \geq 1$.  Left  has a winning move by playing $a$. Indeed, the interval $I_{i,j}^\N$ has length at most $a$, and using the condition~$(\ref{I+prec})$ from \Cref{lem+Iprop} and the induction hypothesis, $n-a$ is a $\L$-position. If $i > 0$, then  Right  playing $c$ leads to the position $n-c \in I_{i-1, j}^\P$ by condition~$(\ref{I+plusc})$. This position is a $\P$-position using the induction hypothesis. If $j > 0$, then similarly,  Right  can play $d$, and put the game in the position $n-d \in I_{i, j-1}^\P$ by condition~$(\ref{I+plusd})$. This position is a $\P$-position using the induction hypothesis.

	Suppose now that $n \in I_{i,j}^\P$. If $i$ and $j$ are both zero, then none of the players have any move, and $n$ is a $\P$-position. Otherwise, if  Left  plays either $a$ or $b$, this leads to a position $n' \in I_{i,j}^\N$ by condition~$(\ref{I+int})$. Using the induction hypothesis, $n'$ is an $\N$-position, and  Left  has no winning move. Right's only possible winning move would be to a $\P$-position $n'$. Using the induction hypothesis this means $n' \in I^\P$. However, this would mean by conditions~$(\ref{I+plusc})$ and~$(\ref{I+plusd})$ that $n \in I^\N$, which is a contradiction of the property~$(\ref{I+disj})$ that $I^\N$ and $I^\P$ are disjoint. Consequently,  Right  has no winning move.
	
	Finally, suppose that $n \not \in I^\P \cup I^\N$. We will show that  Left  has a winning move on $n$, and  Right  does not. Since $I_{0,0}^\P = [0, a[$, we can assume $n \geq a$, and  Left  can play $a$. Suppose that  Left's move to $n-a$ is not a winning move, and let us show that  Left  has a winning move to $n-a-k$. Since  Left's move to $n-a$ is not a winning move, this means that $n-a \in I_{i,j}^\N$ for some integer $i,j$ with $i+j \geq 1$. Consequently we have $n\geq b$, and playing $b$ is a valid move for  Left. By condition~(\ref{I+int}), we can't have $n-b \in I_{i,j}^\N$ since otherwise we would have $n \in I_{i,j}^\P$. Moreover, we can't have either $n-b \in I^\N_{i', j'}$ for some $(i', j') \neq (i,j)$ since it would contradict condition~(\ref{I+prec}). Consequently, $n-b \in I^\L$, and using the induction hypothesis, this is a winning move for  Left.
	The only possible winning move for  Right  would be to play to a position $n'$ which is a $\P$-position. Using the induction hypothesis, this means that $n' \in I^\P$. However using the conditions~$(\ref{I+plusc})$ and~$(\ref{I+plusd})$ this would also imply $n \in I^\N$, a contradiction.	
\end{proof}

\begin{corollary}
	Under the conditions of the theorem, the game $G$ is ultimately $\L$.
\end{corollary}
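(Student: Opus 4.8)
The plan is to derive the corollary directly from \Cref{th+twoVstwo} and the structural description of the outcome sequence that is established inside its proof. First I would recall that the proof of the theorem shows, by induction on the heap size $n$, that every position in $I^\P$ is a $\P$-position, every position in $I^\N$ is an $\N$-position, and every remaining position is an $\L$-position. Consequently the only positions whose outcome differs from $\L$ are those lying in $I^\P \cup I^\N$.

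Next I would verify that $I^\P \cup I^\N$ is finite. Writing $k = b-a$, the interval $I^\P_{i,j}$ is empty as soon as $i+j \geq \lceil a/k \rceil$, and $I^\N_{i,j}$ is empty as soon as $i+j \geq \lceil a/k \rceil + 1$, so only finitely many index pairs $(i,j)$ contribute a nonempty interval. Since each contributing interval $I^\P_{i,j}$ has length $a - (i+j)(b-a) \leq a$ and each $I^\N_{i,j}$ has length $a - (i+j-1)(b-a) \leq a$, both $I^\P$ and $I^\N$ are finite unions of bounded intervals, hence finite sets.

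Finally, combining these two observations, there are only finitely many positions with outcome different from $\L$. Taking $n_0$ to exceed the largest element of the finite set $I^\P \cup I^\N$, every position $n \geq n_0$ has outcome $\L$, which is exactly the statement that $G$ is ultimately $\L$.

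There is no genuine obstacle here: all of the real work has already been carried out in the proof of \Cref{th+twoVstwo}. The only step deserving an explicit word is the finiteness of the two families of intervals, and this is immediate from the emptiness conditions on the indices recorded just before \Cref{lem+Iprop}.
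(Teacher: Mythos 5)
Your proof is correct and takes essentially the same approach as the paper: both arguments combine the position classification established in the proof of \Cref{th+twoVstwo} (positions in $I^\P$ are $\P$, positions in $I^\N$ are $\N$, all others are $\L$) with the finiteness of $I^\P$ and $I^\N$, which follows from the emptiness of $I^\P_{i,j}$ and $I^\N_{i,j}$ once $i+j$ is large. Your write-up is in fact slightly more careful than the paper's one-line proof, which even contains a typo (it refers to ``$I^\L$'' where $I^\P$ is meant).
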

\begin{proof}
	Since $I^\N_{i,j}$ and $I^\P_{i,j}$ are both empty if $i + j > a$, the two sets $I^\L$ and $I^\N$ are finite, and the result follows from the theorem.
\end{proof}


\end{document}